\title[Twisted K-theory]
{Twisted  Equivariant  $K$-Theory  and  $K$-Homology  of $\sldrei$}
\keywords{Universal Coefficient  Theorem, Bredon Cohomology,  Twisted $K$-theory, Baum-Connes Conjecture  with Coefficients}
\author{No\'{e} B\'{a}rcenas }
        \address{
Hausdorff  Center  for  Mathematics \\ Endenicherallee, 60 53115 Bonn, Germany  \\ and \\Centro  de  Ciencias Matem\'aticas. \\ Universidad  Nacional Aut\'onoma  de  M\'exico \\ Morelia, Michoac\'an.  M\'exico 58089}
         \email{barcenas@matmor.unam.mx}
         \urladdr{http://www.matmor.unam.mx/~barcenas}
\author{Mario Vel\'asquez}
\address{Centro  de  Ciencias Matem\'aticas. \\ Universidad  Nacional Aut\'onoma  de  M\'exico \\ Morelia, Michoac\'an.  M\'exico 58089}
 \email{mavelasquezm@gmail.com}
         \date{\today}
\DeclareMathAlphabet\EuR{U}{eur}{m}{n}
\SetMathAlphabet\EuR{bold}{U}{eur}{b}{n}
\theoremstyle{plain}
\newtheorem{theorem}{Theorem}[section]
\newtheorem{proposition}[theorem]{Proposition}
\newtheorem{corollary}[theorem]{Corollary}
\theoremstyle{definition}
\newtheorem{definition}[theorem]{Definition}
\newtheorem{condition}[theorem]{Conditions}
\newtheorem{remark}[theorem]{Remark}
\newtheorem*{theoremn}{Theorem}
\newtheorem*{corollaryn}{Corollary}
{\catcode`@=11\global\let\c@equation=\c@theorem}
\newcommand{\comsquare}[8]                   % Produces a commutative square
{\begin{CD}
#1 @>#2>> #3\\
@V{#4}VV @V{#5}VV\\
#6 @>#7>> #8
\end{CD}
}
\newcommand{\xycomsquare}[8]                   % kommutatives Quadrat (xy-Version)
{\xymatrix
{#1 \ar[r]^{#2} \ar[d]^{#4} &
#3 \ar[d]^{#5}  \\
#6\ar[r]^{#7} &
#8
}
}
\newcommand{\calfin}{\mathcal{FIN}}
\newcommand{\UU}{\mathcal{U}}
\newcommand{\HH}{\mathcal{H}}
\newcommand{\KK}{\mathcal{K}}
\newcommand{ \calr}{\mathcal{R}}
\newcommand{\IC}{{\mathbb C}}
\newcommand{\IN}{{\mathbb N}}
\newcommand{\IZ}{{\mathbb Z}}
\newcommand{\curs}{\EuR}
\newcommand{\CHAINCOMPLEXES}{\curs{CHCOM}}
\newcommand{\MODULES}{\curs{MODULES}}
\newcommand{\coker}{\operatorname{coker}}
\newcommand{\colim}{\operatorname{colim}}
\newcommand{\Ext}{\operatorname{Ext}}
\newcommand{\im}{\operatorname{im}}
\newcommand{\sldrei}{Sl_{3}{\mathbb{Z}}}
\newcommand{\so}[2]{u_{#1}^ {#2}}
\newcommand{\eub}[1]{\underline{E}#1}              %Eunderbar G = classifying space for proper G-actions
\newcommand{\higherlim}[3]{{\setbox1=\hbox{\rm lim}
        \setbox2=\hbox to \wd1{\leftarrowfill} \ht2=0pt \dp2=-1pt
        \mathop{\vtop{\baselineskip=5pt\box1\box2}}
        _{#1}}^{#2}#3}
\newcommand{\OO}{\mathcal{O}_{G}}
\newcommand{\version}[1]                       %marks the date of last editing and compilation
{\begin{center} last edited on #1\\
last compiled on \today \\
name of texfile: \jobname
\end{center}
}
\newcounter{commentcounter}
\begin{document}

\begin{abstract}
We  use  a spectral  sequence to  compute    twisted  equivariant  $K$-Theory  groups for  the  classifying  space  of  proper  actions of  discrete groups.  We  study  a  form  of  Poincar\'e  Duality for twisted  equivariant  $K$-theory studied by  Echterhoff, Emerson and  Kim in the  context  of the  Baum-Connes  Conjecture  with coefficients and  verify  it for  the  Group  $\sldrei$.  
 
\end{abstract}
   \maketitle

\typeout{----------------------------  linluesau.tex  ----------------------------}

%%%%%%%%%%%%%%%%%%%%%%%%%%%%%%%%%%%%%%%%%%%%%%%%%%%%%%%%%%%%%%%%%%%%%%%%%%%%%%%%%
%%%%%%%%%%%%%%%%%%%%%%%%%%%%%%%%%%% Abstract  %%%%%%%%%%%%%%%%%%%%%%%%%%%%%%%%%%%%%%%
%%%%%%%%%%%%%%%%%%%%%%%%%%%%%%%%%%%%%%%%%%%%%%%%%%%%%%%%%%%%%%%%%%%%%%%%%%%%%%%%%

%\typeout{------------------------------------ Abstract ----------------------------------------}
In  this  work, we  examine  computational  aspects  relevant  to  the  computation  of  twisted  equivariant  $K$-theory  and  $K$-homology  groups for  proper  actions  of  discrete  groups. 

Twisted $K$-theory   was introduced  by   Donovan  and Karoubi   \cite{donovankaroubi}  assigning  to a  torsion  element  $\alpha \in H^{3}(X, \mathbb{Z})$ abelian  groups   $^ \alpha K^{*}(X)$ defined  on   a  space  by   using  finite  dimensional matrix  bundles. After  the  growing  interest  by  physicists  in  the  1990s  and  2000s, Atiyah and  Segal \cite{atiyahsegaltwisted} introduced  a   notion  of  twisted  equivariant  $K$-theory for  actions of compact Lie  Groups. In  another  direction, or\-bi\-fold versions  of  twisted  $K$-theory  were  introduced  by  Adem  and Ruan \cite{ademruan}, and   progress   was made  to  develop  computational  tools for  Twisted  Equivariant  $K$-Theory  with the  construction  of  a  spectral sequence in \cite{barcenasespinozauribevelasquez}. 

The  paper  \cite{barcenasespinozajoachimuribe} introduces  Twisted  equivariant  $K$-theory for  proper  actions, allowing a more  general class  of  twists, classified  by the  third  integral  Borel cohomology  group $H^3(X \times_G EG, \mathbb{Z})$.   
  
 We  concentrate  in  the  case  of twistings  given  by discrete  torsion, which  is  given  by    cocycles  
$$\alpha \in Z^2(G, S^1)$$

representing  classes  in the image  of  the projection  map 
$$H^2(G, S^1) \overset{\cong}{\to} H^3(BG, \mathbb{Z}) \to H^3(X{\times }_G EG, \mathbb{Z}).$$

Under this  assumption on the  twist, a  version  of  Bredon cohomology  with  coefficients  in  twisted  representations can  be  used  to approximate   twisted  equivariant  $K$-Theory, by  means of  a spectral  sequence   studied  in  \cite{barcenasespinozauribevelasquez} and \cite{dwyer2008}. 

The Bredon (co)-homology  groups  relevant  to  the  computation  of  twisted  equivariant  $K$- theory, and  its  homological  version, twisted  equivariant $K$-homology  satisfy  a  Universal  Coefficient  Theorem, \ref{theoremUCT}. We state it  more  generally for  a  pair of  coefficient systems  satisfying conditions  \ref{conditionD}.  

\begin{theoremn}[Universal Coefficient Theorem]
Let  $X$  be  a  proper,  finite    $G$-CW complex.  Let  $M^ ?$ and  $M_?$  be  a  pair  of functors  satisfying  Conditions \ref{conditionD}.  Then, there  exists   a short  exact  sequence  of  abelian  groups

 $$ 0\to {Ext}_{\mathbb{Z  }}  (H_{n-1}^{G} (X, M_?), \mathbb{Z})\to  H^ {n}_{G}(X,  M^?) \to {Hom}_{\mathbb{Z  }}  (H_{n}^{G } (X, M_?), \mathbb{Z}) \to  0 $$ 

\end{theoremn}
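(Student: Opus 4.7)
The plan is to mimic the classical argument for the Universal Coefficient Theorem, translated into the Bredon equivariant setting. Since $X$ is a proper, finite $G$-CW complex, the orbits of cells are of the form $G/H$ for finite subgroups $H$, and so we have an explicit cellular Bredon chain complex
$$C_n^G(X, M_?) = \bigoplus_{i \in I_n} M_?(G/H_i),$$
where $I_n$ indexes the equivariant $n$-cells and $H_i$ is the stabilizer of the $i$-th cell. The first step is to use Conditions \ref{conditionD} to ensure that each $M_?(G/H_i)$ is a finitely generated free abelian group, so that $C_*^G(X, M_?)$ is a finite chain complex of finitely generated free $\mathbb{Z}$-modules.

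The second step is to identify the Bredon cochain complex with the $\mathbb{Z}$-dual of the Bredon chain complex. Conditions \ref{conditionD} should specify that $M^?(G/H) = \mathrm{Hom}_\mathbb{Z}(M_?(G/H), \mathbb{Z})$ as functors on the orbit category, with the contravariant/covariant structures compatible under $\mathbb{Z}$-duality. Because $X$ has finitely many equivariant cells in each dimension, products coincide with direct sums, and we obtain a natural isomorphism of cochain complexes
$$C^n_G(X, M^?) \;\cong\; \mathrm{Hom}_\mathbb{Z}\bigl(C_n^G(X, M_?), \mathbb{Z}\bigr),$$
compatible with the differentials.

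The third step is to apply the classical algebraic Universal Coefficient Theorem for a bounded chain complex $C_*$ of finitely generated free abelian groups, which produces a natural split short exact sequence
$$0 \to \mathrm{Ext}_\mathbb{Z}(H_{n-1}(C_*), \mathbb{Z}) \to H^n(\mathrm{Hom}_\mathbb{Z}(C_*, \mathbb{Z})) \to \mathrm{Hom}_\mathbb{Z}(H_n(C_*), \mathbb{Z}) \to 0.$$
Substituting $C_* = C_*^G(X, M_?)$ and using the identification from step two yields the desired sequence.

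The main obstacle will be step two: extracting from Conditions \ref{conditionD} the precise compatibility between $M^?$ and $M_?$ that gives a cochain-level $\mathbb{Z}$-duality, rather than merely a duality between the resulting Bredon (co)homology groups. The finiteness of $X$ is crucial here, since it is what turns the product defining Bredon cochains into a direct sum and allows $\mathrm{Hom}_\mathbb{Z}(-, \mathbb{Z})$ to commute with it. The splitting of the resulting sequence is inherited from the splitting in the classical algebraic UCT, though the splitting is not natural in $X$.
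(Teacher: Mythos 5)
Your proposal is correct and follows essentially the same route as the paper: the paper's proof likewise uses Conditions \ref{conditionD} to identify the Bredon cochain complex $C^n_G(X,M^?)$ with the $\mathbb{Z}$-dual of the (finitely generated free) Bredon chain complex $C_n^G(X,M_?)$ via the chosen bases, and then runs the classical algebraic Universal Coefficient argument (spelled out with the resolution $0\to B_n\to Z_n\to H_n\to 0$ and the splittings coming from freeness, rather than cited as a black box). The only cosmetic difference is that the paper routes the identification through the auxiliary covariant functor $\widehat{M}=\mathrm{Hom}_{\mathbb{Z}}(M^?(-),\mathbb{Z})$, which is exactly the compatibility you flag as the key point of step two.
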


 The  main  application  for  the  cohomological  methods described in this note  are  computations   verifying  a  form  of  Poincar\'e Duality  in  twisted  equivariant  $K$-Theory for  discrete  groups, as  studied  by  Echterhoff,  Emerson and  Kim  \cite{echterhoffemersonkim} in the  context  of  the  Baum-Connes  Conjecture  with coefficients.

Given  a $G$-C*-algebra  $A$, the Baum-Connes  conjecture with coefficients  in  $A$  for  $G$  predicts  that  an  analytical  assembly  map 
$$K_*^G(\eub{G}, A)= KK_{G}(C_{0}(\eub{G}), A) \longrightarrow  K_{*}(A\rtimes G)$$
is  an  isomorphism,  where  $A\rtimes G$  is  the  crossed  product  $C^*$-algebra.

A  discrete  torsion  twist  defines  an action  of   $G$  on the  $C^*$- algebra   of  compact  operators  on  $l^2(G)$,  which only  depends  on the cohomology  class  of  $\omega$  and  not on particular representing  cocycles.  We  will  denote  this $G$-$C^*$-algebra by  $K_\omega$. 
Given a  discrete  torsion twist $\omega$, there  exist analytic  versions   of  twisted equivariant  $K$-homology  and $K$-theory groups, $K^*_G(X, \omega)$, $K_*^G(X,\omega)$,  which are defined  in terms  of  the  equivariant  Kasparov $KK$-theory groups
$KK_G^*(C_0(X), K_\omega)$, respectively $KK_G^*(K_\omega, C_0(X))$. See  \cite{echterhoffemersonkim} for more  details. 

Our  computational  methods  describe a  duality  relation, corollary  \ref{corollaryduality} between  these  groups as  follows:

\begin{corollaryn}[Duality for twisted equivariant K-theory]

Let  $G$  be  a  discrete  group  with a finite model  for  $\eub{G}$. Let  $\omega \in Z^2(G, S^1)$.  Assume  that the  Bredon  cohomology  groups  $H^{*}_G(X, R^{- \alpha})$ relevant  to  the  computation of the  twisted  equivariant $K$-theory are  all  free  abelian and are concentrated in degree 0 and 1.  Then,   there  exists  a  duality  isomorphism
$$K^{*}_G(\eub{G}, K_{-\omega}) \longrightarrow K^{*}_G( K_{\omega}, \eub{G})$$
\end{corollaryn}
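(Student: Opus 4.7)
The strategy is to layer three facts: the Atiyah--Hirzebruch type spectral sequences of \cite{barcenasespinozauribevelasquez,dwyer2008} converging to both sides of the duality, the Universal Coefficient Theorem stated above, and the degeneracy forced by the concentration hypothesis. The cohomological spectral sequence has $E_2$-page built from the Bredon cohomology groups $H^p_G(\eub{G}, R^{-\omega})$ and converges to $K^{p+q}_G(\eub{G}, K_{-\omega})$; a dual homological version has $E^2$-page built from $H_p^G(\eub{G}, R_{-\omega})$ and converges to the $K$-homology groups $K_{p+q}^G(\eub{G}, K_{\omega})$, which the statement writes as $K^*_G(K_\omega, \eub{G})$ in Kasparov-product notation.

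Because the relevant Bredon groups vanish outside $p = 0, 1$, every higher differential $d_r$ with $r \geq 2$ is forced to be zero for dimensional reasons, and freeness of the $E_\infty$-entries eliminates non-trivial extension problems, so both spectral sequences collapse at the second page and identify their abutments with direct sums of Bredon (co)homology groups in the two surviving columns. I would then apply Theorem \ref{theoremUCT} to the coefficient pair $(R^{-\omega}, R_{-\omega})$, which one verifies satisfies Conditions \ref{conditionD}; the freeness hypothesis annihilates the Ext term and yields
$$H^n_G(\eub{G}, R^{-\omega}) \;\cong\; \mathrm{Hom}_{\mathbb{Z}}\bigl(H_n^G(\eub{G}, R_{-\omega}), \mathbb{Z}\bigr).$$
Threading this identification through the two collapsed spectral sequences assembles an isomorphism of the abelian groups underlying the two sides of the claimed duality.

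The main obstacle is \emph{not} producing this abstract abelian group isomorphism, which is essentially formal given the ingredients above, but rather verifying that the composite it defines coincides with the Poincar\'e duality pairing induced by the Kasparov fundamental class of \cite{echterhoffemersonkim}. This requires checking that the edge maps of both spectral sequences and the UCT connecting homomorphism are natural with respect to cap product with the fundamental class; once this compatibility is set up, the abstract isomorphism is the desired duality map. The low-dimensional concentration keeps the requisite diagram chases short enough to be tractable, and reduces the comparison with the assembly map of the Baum--Connes conjecture with coefficients to a two-term verification.
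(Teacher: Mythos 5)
Your proposal matches the paper's argument, which is given in one line: Theorem \ref{theoremUCT} combined with the Atiyah--Hirzebruch spectral sequence, with concentration in degrees $0$ and $1$ forcing collapse and freeness killing the $\mathrm{Ext}$ term, so that $H^n_G \cong \mathrm{Hom}_{\mathbb{Z}}(H_n^G,\mathbb{Z})$ identifies the two abutments. Your final paragraph about compatibility with the Kasparov fundamental class of \cite{echterhoffemersonkim} goes beyond what the paper establishes --- the paper only asserts the existence of an abstract isomorphism and does not verify that it is induced by cap product with a fundamental class.
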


The  family  of  groups $G$ satisfying  the previous asummptions  on  both  the  twist and  the classifying space  $\eub{G}$ includes several  examples. We  pay  attention  to  $\sldrei$. 

 We  use  the  computation  of  the  cohomology  of  $\sldrei$ due  to  Soul\'e \cite{Soule(1978)},  previous  work  by  S\'anchez-Garc\'ia \cite{Sanchez-Garcia(2006SL)},  as  well  as  the  theory  of  projective  representations  of  finite  groups to  compute  the  twisted  equivariant  $K$-theory  on  the  classifying space  $\eub{\sldrei}$,  by  computing  the Bredon  cohomology  group associated  to  a specific  torsion twist. Using  the  spectral  sequence  we verify  that  the  computation of  twisted  equivariant  $K$-theory  reduces  to  Bredon cohomology in Theorem \ref{theoremtwistedktheory}. 
 
\begin{theoremn}[Calculation of twisted equivariant K-homology of $\sldrei$]
The  equivariant $K$-homology  groups of  $\sldrei$ with  coefficients  in the  $\sldrei $-$C^*$  algebra  $\KK_{u_{1}}$  are  given  as  follows: 
\begin{equation*}
K_p^{\sldrei}(\eub{\sldrei},\KK_{u_{1}})= 0 \, \text{ p odd}, \quad K_p^ {\sldrei}(\eub{\sldrei}, \KK_{u_{1}})\cong\IZ^{\oplus13} \text{ p even}
\end{equation*}

\end{theoremn}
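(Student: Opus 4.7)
The plan is to apply the Bredon-type spectral sequence of \cite{barcenasespinozauribevelasquez} and \cite{dwyer2008} whose $E^{2}$-page has the form
\begin{equation*}
E^{2}_{p,q}\;=\;H_{p}^{\sldrei}\bigl(\eub{\sldrei},\,R^{-u_{1}}\otimes K_{q}(\mathrm{pt})\bigr)\;\Longrightarrow\;K^{\sldrei}_{p+q}(\eub{\sldrei},\KK_{u_{1}}),
\end{equation*}
where $R^{-u_{1}}$ is the coefficient system sending $\sldrei/H$ to the twisted representation ring $R^{-u_{1}|_{H}}(H)$. Since $K_{q}(\mathrm{pt})$ is $\IZ$ in even $q$ and $0$ in odd $q$, all columns with odd $q$ vanish automatically. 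The strategy is to identify the coefficient system explicitly and show that only even total degrees survive at $E^{2}$, then use freeness to rule out extension problems.

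I would begin by fixing the three-dimensional model for $\eub{\sldrei}$ coming from Soul\'e's cell decomposition \cite{Soule(1978)}, as refined by S\'anchez-Garc\'ia \cite{Sanchez-Garcia(2006SL)}; this gives a finite list of orbit representatives together with explicit finite stabilizers (the finite subgroups of $\sldrei$, essentially dihedral and symmetric groups of small order) and explicit attaching data. For each stabilizer $H$ I would restrict the cocycle $u_{1}\in Z^{2}(\sldrei,S^{1})$, obtain the class $u_{1}|_{H}\in H^{2}(H,S^{1})$, and compute $R^{-u_{1}|_{H}}(H)$ through the standard projective representation theory of finite groups (its rank equals the number of $u_{1}|_{H}$-regular conjugacy classes, and in the cases arising here these rings are free abelian of small rank).

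Next I would assemble the Bredon chain complex $C_{\ast}^{\sldrei}(\eub{\sldrei};R^{-u_{1}})$ from this data, with differentials given by the alternating sum of induction maps between twisted representation rings associated to the cellular incidences. Using the explicit description of the orbit space from S\'anchez-Garc\'ia, I would verify that the odd-degree homology groups vanish and that the even-degree homology assembles into a free abelian group of total rank $13$. Since $E^{2}_{p,q}$ is concentrated in even $q$ and since, by the computation, it is also concentrated in even $p$, every differential $d^{r}$ with $r\ge2$ connects a vanishing group to a non-vanishing one, so the spectral sequence collapses at $E^{2}$. Freeness of all the Bredon groups then rules out extension problems in passing from $E^{\infty}$ to the graded pieces of $K^{\sldrei}_{\ast}(\eub{\sldrei},\KK_{u_{1}})$, yielding $\IZ^{\oplus 13}$ in even degrees and $0$ in odd degrees.

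The main obstacle will be the third step: keeping careful track of the restriction of $u_{1}$ to each of the finite stabilizers and computing the induction maps between twisted representation rings along each cellular incidence. In particular, one must verify that the twisted representation rings really are free abelian (so that the spectral sequence argument and the universal coefficient theorem \ref{theoremUCT} apply cleanly), and that the alternating induction differentials reduce the ranks exactly to the advertised total of $13$. The cellular model is small enough that this is a finite but delicate calculation; vanishing of odd-degree Bredon homology ultimately follows from the parity of the dimensions in Soul\'e's decomposition and from the injectivity properties of the induction maps between the relevant $R^{-u_{1}|_{H}}(H)$.
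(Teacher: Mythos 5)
Your proposal is correct in outline and rests on the same computational backbone as the paper: Soul\'e's and S\'anchez-Garc\'ia's cell structure for $\eub{\sldrei}$, the restriction of $u_1$ to the finite stabilizers, the projective representation theory of the resulting small dihedral and symmetric groups, and a Bredon-type Atiyah--Hirzebruch spectral sequence that collapses because everything is concentrated in degree $0$. The one genuine difference is the direction of the computation. You propose to assemble the Bredon \emph{chain} complex directly, with differentials given by induction maps between twisted representation rings, and to read off the homology. The paper instead computes the Bredon \emph{cochain} complex, whose differentials are restriction maps (the matrices $\Phi_1,\Phi_2,\Phi_3$ built from the twisted character tables), obtains $H^0\cong\IZ^{\oplus 13}$ and vanishing in higher degrees, and only then transfers this to homology via its Universal Coefficient Theorem \ref{theoremUCT}, which applies because the twisted representation rings satisfy Conditions \ref{conditionD}. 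By Frobenius reciprocity for twisted characters the induction matrices are exactly the transposes of the restriction matrices, so the two computations carry identical information and your route is equally valid; what the paper's route buys is that the single cohomological calculation simultaneously yields the twisted $K$-theory (Theorem \ref{theoremtwistedktheory}), the twisted $K$-homology, and the duality isomorphism of Corollary \ref{corollaryduality}, whereas your direct homological calculation establishes only the $K$-homology statement and would have to be dualized to recover the rest. Two small cautions: the vanishing of the positive-degree Bredon groups is not a parity or injectivity phenomenon that can be argued a priori --- it comes out of the explicit elementary divisor and rank computation, which you rightly flag as the delicate step --- and the identification of the topological target of the spectral sequence with the analytic groups $KK_{\sldrei}(C_0(\eub{\sldrei}),\KK_{u_1})$ deserves at least a sentence, as it does in the paper.
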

\subsection{Aknowledgments}
This  first  author  was  supported  by  the  Hausdorff Center for  Mathematics, Wolfgang L\"uck's Leibnizpreis and  a  CONACYT postdoctoral  fellowship.  The  second  author  was  supported  by COLCIENCIAS trough the grant ``Becas  Generaci\'on del  Bicentenario'' number  494, the  Fundaci\'on  Mazda  para  el  Arte y la  Ciencia,  Wolfgang L\"uck's  Leibnizpreis and  a  UNAM postdoctoral  fellowship.
The authors  thank  an anonymous  referee  for  valuable  suggestions  concerning  the presentation  of  the  material exposed  in this note.

\section{Bredon  cohomology }
We  recall  briefly some definitions  relevant  to  Bredon  homology and  cohomology, see \cite{valettemislin}  for  more  details. 
Let $G$ be a discrete group. A $G$-CW-complex is a CW-complex with a $G$-action permuting the cells and such that
if a cell is sent to itself, this is done by the identity map. We call the $G$-action proper if all cell stabilizers are finite
subgroups of $G$.
\begin{definition} A model for $\eub G$ is a proper $G$-CW-complex $X$ such that for any proper $G$-CW-complex $Y$ there is a
unique $G$-map $Y\rightarrow X$, up to $G$-homotopy equivalence.
\end{definition}
One can prove that a proper $G$-CW-complex $X$ is a model of $\eub G$ if and only if the subcomplex of fixed points $X^H$ is
contractible for each finite subgroup $H\subseteq G$.  It can be shown that classifying spaces for proper actions always exist. They are
clearly unique up to $G$-homotopy equivalence.

Let $\mathcal{O}_G$
be the orbit category of $G$; a category with one object $G/H$ for each subgroup $H\subseteq G$ and where  morphisms are  given  by  $G$-equivariant maps.  There  exists  a  morphism $\phi:G/H\rightarrow G/K$ if and only if $H$ is conjugate in $G$  to a subgroup of $K$.

\begin{definition}[Cellular Chain complex associated  to a  $G$-CW  complex]
Let  $X$  be  a  $G$-CW-complex.  The  contravariant   functor  $\underline{C}_{*}(X):\mathcal{O}_G\to  \mathbb{Z}-\CHAINCOMPLEXES  $  assigns  to every  object  $G/H$     the  cellular $\mathbb{Z}$-chain  complex   of  the $H$-fixed point  complex    $ \underline{C}_{*}(X^ {H})\cong C_{*}({\rm  Map  }_{G}(G/H, X))$  with  respect  to  the  cellular  boundary  maps $\underline{\partial}_{*} $. 
\end{definition}

We  will  use  homological  algebra  to  define Bredon  homology  and  cohomology  functors. 

A  contravariant  coefficient  system  is  a  contravariant  functor 
$$ M:\OO \to \mathbb{Z}-\MODULES$$

 % where  $\OO$  is   the  full  subcategory  of the  orbit  category  of  $G$,    $\mathcal{O}_{G, \mathcal{F}}$  generated   by  the  objects $G/H$ for  a  family of  subgroups   $H\in \mathcal{F}_G$.    

Given  a  contravariant  coefficient  system $M$, the  Bredon    cochain   complex \\ $C_G^*(X;M)$ is  defined  as the   abelian  group   of  natural  transformations   of  functors  defined  on  the  orbit  category $\underline{C}_{*}(X) \to  M$. In  symbols, 

$$C_G^n(X;M)=Hom_{\mathcal{O}_{\mathcal{F}_G}}(\underline{C}_n(X),M)$$
Where  $\mathcal{F}_G$ is  a  family containing  the  isotropy  groups  of  $X$. 

Given a  set  $\{e_{\lambda}\}$ of   orbit   representatives of  the n-cells of  the  $G$-CW  complex  $X$,  and isotropy  subgroups  $S_{\lambda}$  of  the  cells  $e_{\lambda}$,   the  abelian  groups $C_G^n(X,M)$  satisfy:
 
 $$C_G^n(X,M)= \underset{\lambda}{\bigoplus }Hom_{\mathbb{Z}}(\mathbb{Z}[e_{\lambda}], M(G/S_{\lambda}))$$   
 with  one  summand  for  each  orbit representative  $e_\lambda$.
They  afford  a differential $\delta^n:C_G^n(X,M)\to C_G^{n-1}(X,M)$ determined  by  $\underline{\partial}_*$ and  maps $M(\phi):  M(G/S_\mu)\to M( G/ S_\lambda )$  for  morphisms  $\phi:G/S_\lambda \to  G/S_\mu$.

\begin{definition}[Bredon cohomology] 
The  Bredon  cohomology  groups   with  coefficients  in  $M$, denoted  by  $H^{*}_{G} (X,  M)$    are  the  cohomology   groups  of  the  cochain  complex  $\big (C_{G}^ *(X, M), \delta^* \big )$.

 \end{definition}

Dually to  the  cohomological  situation,  given a  covariant functor  $$N:\OO\to  \mathbb{Z}-\MODULES,$$  the chain  complex 

$$ C_{*}^ G= \underline{C}^{n}(X) \underset{\OO}{\otimes}N =\underset{\lambda}{\bigoplus}\mathbb{Z}[e_{\lambda}]\otimes  N(G/S_{\lambda})$$ 

admits  differentials $\delta_* =\partial_* \otimes N(\phi)$  for    morphisms   $\phi:  G/ S_{\lambda}\to  G/S_{\mu}$ in $\OO$.

\begin{definition}[Bredon homology]
  The  Bredon  homology   groups  with  coefficients  in $N$ ,  denoted  by  $H_{*}^{G}(X, N)$.   are  defined  as  the  homology  groups   of  the  chain  complex $\big( C_{*}^{G}(X, N), \delta_* \big)$
\end{definition}

\begin{remark}[Determination  of Bredon (co)-homology  in practice]\label{matrix}
The  coefficient  systems  considered  in  this  note  yield chain  complexes,  respectively  cochain complexes    of   free  abelian  groups with  preferred  bases  to  compute  both  Bredon  homology  and  cohomology. 

Notice that if we have a complex of free abelian groups
$$\cdots\rightarrow\IZ^{\oplus n}\xrightarrow{f}\IZ^{\oplus m}\xrightarrow{g}\IZ^{\oplus k}\rightarrow\cdots$$
with $f$ and $g$ represented by matrices $A$ and $B$ for some fixed basis, then the homology at $\IZ^{\oplus m}$ is
$$ker(g)/im(f)\cong\IZ/d_1\IZ\oplus\cdots\oplus\IZ/d_s\IZ\oplus\IZ^{\oplus(m-s-r)},$$
where $r = rank (B)$ and $d_1,\ldots, d_s$ are the elementary divisors of $A$.
\end{remark}

Given a  discrete  group  $G$, the  complex  representation ring  defines  two  functors   defined  on the  subcategory    generated by  objects  $G/H$  with  $H$  finite.  

$${\calr^ ?  \quad \calr_?} $$

For  every  object $G/H$,  the   groups $R^?(G/H), R_?(G/H)$ agree  with   the  Gro\-then\-dieck  group  of  isomorphism  classes  of  complex   representations  $R_\mathbb{C}    (H)$  of  the finite   subgroup $H$. 

The   contravariant   functor $\calr^?$   assigns  to  a   $G$-map $\phi:G/H\to  G/K$  the   restriction  map  
$R_{\mathbb{C}}(K)\to  R_{\mathbb{C}}(gHg^-1)  \cong R_{\mathbb{C}}(H)$   to the  subgroup $gKg^{ -1}$ of  $H$  determined  by  the  morphism $\phi:G/H\to  G/K$, where $g\in G$  is  such  that $\phi(eH)=gK$.

The  covariant  functor $\calr_?$  assigns   the  the  morphism  $\phi$  the   induction  map  $R_{\mathbb{C}}(H) \cong  R_{\mathbb{C}}(gHg^{-1})\to   R_{\mathbb{C}}(K)$.

Given  a  finite  group $H$  the  group  $R_\mathbb{C}(H)$  is  free  abelian,  isomorphic  to  the  free  abelian  group  generated  by  the  set $\rho_{1}, \ldots, \rho_{s}$ of  irreducible  characters.  For any  representation $\rho$,  there  exists  a   unique  expression
$\rho= n_{1} \rho_{1}, \ldots, n_{s}\rho_{s}$, where $n_{i}= (\rho \mid \rho_{i})$, and  $(\quad \mid \quad)$  is  the scalar product  of  characters.

Recall  that  due  to  Frobenius  reciprocity,  given  a  subgroup  $K$ of $H$, a  representation $\tau$  of  $K$ and  a   representation $\rho$  of  $H$,  the  equation   $(\tau\uparrow\mid\rho)_H=(\tau\mid\rho\downarrow)_{K}$ holds, where $\downarrow$ denotes  restriction and  $\uparrow$  denotes  induction.

\subsection{Bredon  (co)-homology  with  coefficients  in twisted  representations. }
\begin {definition}
Let  $H$  be a   finite  group and  $V$  be  a  complex  vector  space. Given  a   cocycle     $\alpha:H\times H\to S^{1}$  representing  a  class  in  $ H^{2}(H,S^{1})\cong H^{3}(H,\mathbb{Z})$, an $\alpha$-twisted  representation  is  a   function $P:H\to Gl(V)$  satisfying: 
$$P(e)={\rm 1}$$  
$$P(x)P(y)=\alpha(x,y)P(xy)$$
\end{definition}
The  isomorphism  type  of an $\alpha$-twisted  representation  only  depends  on the  cohomology  class  in  $H^{2}(H,S^{1})$.

\begin{definition}
Let  $H$  be  a  finite  group  and   $\alpha:H\times H\to S^{1}$  be  a cocycle  representing a class  in  $ H^{2}(H,S^{1})\cong H^{3}(H,\mathbb{Z})$.  The  $\alpha$-twisted  representation   group of $H$, denoted  by  $^{\alpha}\mathcal{R}(H)$  is   the  Grothendieck  group  of  isomorphism classes  of complex, $\alpha$-twisted representations with direct  sum as binary  operation. 
\end{definition}

Let   $H$  be  a  finite  group.  Given  a   cocycle  $ \alpha \in H^{2}(H,S^{1})$ representing  a  torsion class of  order  $n$,  the   normalization  procedure gives  a  cocycle $\beta$  cohomologous  to  $\alpha$  such that  $\beta: H\times  H\to  S^ {1}$ takes  values  in  the  subgroup  $\mathbb{Z}/n\subset S^{1}$   generated  by  a  primitive  $n$-th  rooth  of  unity $\eta$.  Associated  to  a  normalized  cocycle, there  exists   a   central  extension  
$$1\to  \mathbb{Z}/n\to H^*\to H\to  1$$
with  the  property  that  any   twisted   representation  of $H$  is  a linear  representation  of  $H^*$, with   the  additional  property  that  $\mathbb{Z}/n$  acts  by   multiplication  with  $\eta$. Such a  group  is called  a  Schur covering  group for  $H$.

\begin{definition}
Let  $\rho:H\to  Gl(V)$  be  an $\alpha$-twisted  representation.  The  character  of  $\rho$  is  the  map  $H\to \mathbb{C}$  given  by  $\chi(h) = {\rm trace}(\rho(h))$. 
\end{definition}
Given  a cocycle  $\alpha$, an  element $h\in H$  is  said  to  be  $\alpha$-regular  if  $\alpha(h,x)= \alpha(x,h)$ for  all $x\in H$. 
For  a   choice  of   representatives  of  conjugacy  classes  of  $\alpha$-regular  elements, $\{x_{1}, \ldots , x_{k}\}$ in  $H$,  an  $\alpha$-character  table    gives  the  values  of  the  character  of  irreducible $\alpha$-twisted  representations  by  evaluating $\chi(x_{i})$. An $\alpha$-character  is  not  constant  in  conjugacy  classes  of  elements  in  $H$; it  depends  on the  choice  of representatives of the  conjugacy  classes  of $\alpha$-regular  elements.  The  following  result  shows  the  relation to  different  choices of a  set  of  representatives   by  comparing  it  to the  linear  characters  of a  Schur  covering  group $H^ *$.  It  is  proven  in Theorem 1.1, Part  i Chapter 5, page  205  in \cite{karpilovsky3}. 

\begin{theorem}[Rectification  procedure for  characters  of  a  Schur  Covering group]\label{theoremrectification}
Let $1\rightarrow A\rightarrow H^*\xrightarrow{f}H\rightarrow 1$ be a finite central group
extension and let $\mu:H\rightarrow H^*$ be a fixed section of $f$. For any given $\xi\in Hom(A,\IC^*)$, let $\alpha =\alpha_\xi\in Z^2(H,\IC^*)$ be defined by

$$\alpha(x,y)=\xi(\mu(x)\mu(y)\mu(xy)^{-1})\text{ for all }x,y\in H$$
Then if $\lambda_1^*,\ldots,\lambda_n^*$ are all distinct irreducible $\IC$-characters of $H^*$ whose restriction
to $A$ has $\xi$ as an irreducible constituent and if $\lambda_i: H\rightarrow\IC$ is defined by
$$\lambda_i(g):= \lambda_i^*(\mu(g)) \text{ for all } g\in H$$
then $\lambda_1,\ldots,\lambda_r$ are all distinct irreducible $\alpha$-characters of $H$. Moreover,  if $\mu $  is  a  conjugation  preserving  section, then each  $\lambda_i$ is  a  class  function. 
\end{theorem}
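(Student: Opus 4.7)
The plan is to set up the correspondence $\rho^{*}\leftrightarrow \rho$ where $\rho(g):=\rho^{*}(\mu(g))$, and verify that it induces a bijection between isomorphism classes of irreducible $\mathbb{C}$-representations of $H^{*}$ whose restriction to $A$ contains $\xi$ and isomorphism classes of irreducible $\alpha_\xi$-twisted representations of $H$. First I would check that $\alpha_\xi$ is a $2$-cocycle: since $\mu(x)\mu(y)\mu(xy)^{-1}$ lies in $A$ and multiplication in $H^{*}$ is associative, the homomorphism property of $\xi$ translates directly into the cocycle identity for $\alpha_\xi$.

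Next I would establish the main correspondence. Given an irreducible $\rho^{*}: H^{*}\to Gl(V)$ whose restriction to $A$ contains $\xi$, Schur's lemma together with the centrality of $A$ forces $\rho^{*}(a)=\xi(a)\cdot\id_V$ for every $a\in A$. Writing
$$\mu(x)\mu(y)=\bigl(\mu(x)\mu(y)\mu(xy)^{-1}\bigr)\mu(xy)$$
and applying $\rho^{*}$ to both sides immediately yields $\rho(x)\rho(y)=\alpha_\xi(x,y)\rho(xy)$, so $\rho$ is an $\alpha_\xi$-twisted representation of $H$. Conversely, given an $\alpha_\xi$-twisted $\rho: H\to Gl(V)$, one defines $\rho^{*}(a\cdot\mu(g)):=\xi(a)\rho(g)$ and checks, using the cocycle identity for $\alpha_\xi$, that this is a genuine linear representation of $H^{*}$ whose restriction to $A$ is $\xi$-isotypic.

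To finish the bijection I would verify that the assignment preserves irreducibility, isomorphism classes, and distinctness; this is routine once one translates intertwiners on the $H^{*}$ side into intertwiners on the $\alpha_\xi$-twisted side. The class function claim follows easily: if $\mu$ is conjugation preserving, then
$$\lambda_i(hgh^{-1})=\lambda_i^{*}\bigl(\mu(h)\mu(g)\mu(h)^{-1}\bigr)=\lambda_i^{*}(\mu(g))=\lambda_i(g),$$
because $\lambda_i^{*}$ is itself a class function on $H^{*}$. The main obstacle I expect is the completeness step, i.e.\ showing that every irreducible $\alpha_\xi$-twisted representation of $H$ arises from some irreducible $H^{*}$-representation with central character $\xi$. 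This is cleanest via the isotypic decomposition of $\mathbb{C}[H^{*}]$ along the characters of the central subgroup $A$, identifying the $\xi$-isotypic summand with the twisted group algebra $\mathbb{C}^{\alpha_\xi}[H]$; matching simple modules on both sides then yields the desired correspondence.
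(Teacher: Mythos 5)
Your argument is correct, but note that the paper itself offers no proof of this statement: it simply quotes the result from Karpilovsky (Theorem 1.1, Part (i), Chapter 5), so there is no in-paper argument to compare against. What you have written is essentially the standard proof underlying that citation: the cocycle check for $\alpha_\xi$, the observation via Schur's lemma and centrality that an irreducible $\rho^{*}$ containing $\xi$ on $A$ must satisfy $\rho^{*}(a)=\xi(a)\,\mathrm{id}$, the two inverse constructions $\rho(g)=\rho^{*}(\mu(g))$ and $\rho^{*}(a\mu(g))=\xi(a)\rho(g)$, the identification of intertwiners on both sides (since $A$ acts by scalars, the algebras generated by $\rho(H)$ and $\rho^{*}(H^{*})$ coincide), and completeness via the central idempotent $e_\xi=\tfrac{1}{|A|}\sum_{a\in A}\xi(a)^{-1}a$, which identifies $e_\xi\,\mathbb{C}[H^{*}]$ with the twisted group algebra $\mathbb{C}^{\alpha_\xi}[H]$; the class-function claim for a conjugation-preserving section is immediate as you say. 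One small point worth making explicit: with the paper's definition of twisted representation requiring $P(e)=1$, you need the section normalized so that $\mu(e)=e$ (otherwise $\rho(e)=\xi(\mu(e))$ and $\alpha_\xi$ is not normalized); this is harmless but should be stated, and it matches the normalization conventions used elsewhere in the paper.
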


 We can define contravariant and   covariant  coefficient  systems  for  the  family  $\mathcal{F}_G= \calfin$  of  finite  subgroups    
 agreeing  on  objects  by using the $\alpha$-twisted representation group functor $\calr_\alpha(\quad)$. 
 
 \begin{definition}\label{definitiontwistedcoefficients}
 Let $G$  be  a  discrete  group and  let   $\alpha\in Z^ {2}(G,S^ {1})$ be  a  cocycle.  
 Define $\calr_\alpha$ on objects by
$$ \calr{_{\alpha}}_?(G/H)  =  \calr_\alpha^?(G/H): =^{i^ {*}(\alpha)}\calr (H)$$

where $ i:H \to G$  is  the  inclusion.
 
And  induction of  $\alpha$-twisted, representations   for  the covariant,  part $  \calr{_{\alpha}}_?  $,    respectively restriction  of $\alpha$-representations for  the  contravariant  part $\calr_\alpha^?$. 
 \end{definition}

Orthogonality  relations  between  irreducible $\alpha$-characters  and  Frobenius  reciprocity  go  over  the  the   setting  of   $\alpha$-twisted  representations, compare  Proposition  11.7  in  \cite{karpilovsky3}, page  73 and Theorem 11.8  in  page  73.

\begin{definition}
Let $G$  be  a  discrete  group, let  $X$  be  a  proper $G$-CW complex, and  let  $\alpha\in Z^{2}(G, S^ {1})$  be  a  cocycle. The  $\alpha$-twisted Bredon  cohomology,  respectively  $\alpha$-twisted  homology  groups  of  $X$  are  the  Bredon  cohomology, respectively  homology  groups  with  respect  to  the   functors  described  in definition \ref{definitiontwistedcoefficients}.

\end{definition}

We  will  consider coefficient  systems which  consist  of  abelian  groups  with  preferred bases. This  condition  produces  a convenient  duality  situation, and  produces  based (co)-chain complexes  as  input  for  the  computation of Bredon (co)-homology  groups.  
\begin{condition}\label{conditionD}
Let $G$ be  a  discrete  group, Let $ M_?$ and  $M^?$ be covariant,  res\-pec\-ti\-vely  contravariant  functors  defined  on a  subcategory $\OO$  of  the  orbit  category $\mathcal{O}$ agreeing  on  objects.  Suppose  that 

\begin{itemize}
\item There  exists  for  every  object $G/H$ a  choice of  a  finite  basis  $\{\beta_{i^ {H}}\}$    expressing $M_?(G/H)= M^ ?(G/H) $ as  the   finitely  generated,  free abelian  group on $\{\beta_{i^ {H}}\}$   and  isomorphisms  $a_{H}: M^{?}(G/H) \overset{\cong}{\rightarrow}\mathbb{Z} [ \{\beta_{i^ {H}}\}] \overset{\cong}{\leftarrow}M_{?}(G/H) :b_{H}$.

\item  For  the  covariant  functor  $\widehat{M}:=Hom_ {\mathbb{Z}}(M^ ?(\quad), \mathbb{Z})$,    the  dual  basis  $\{\widehat{\beta}_{i^ {H}}\}$ of  $Hom_\mathbb{Z}(\mathbb{Z} [ \{\beta_{i^ {H}}\}] , \mathbb{Z})$  and   the   isomorphisms $a_{H}$ and  $b_{H}$, the  following  diagram  is commutative: 
$$\xymatrix@d{ \widehat{M}(G/H) \ar[r]^{\widehat{M}(\phi)}  &  \widehat{M}(G/K) \\ \mathbb{Z}[\{\widehat{\beta}_{i^ {H}}\}] \ar[u]^{\widehat{a_{H}}} &   \mathbb{Z}[\{\widehat{\beta}_{j^ {K}}\}] \ar[u]_{\widehat{a_{K}}}  \\ \mathbb{Z} [ \{\beta_{i^ {H}}\}] \ar[u]^ {D_H}&  \mathbb{Z} [ \{\beta_{j^ {K}}\}] \ar[u]_{D_K}\\        M_{?}(G/H) \ar[r]_{M_?(\phi)} \ar[u]^{b_{H}}  & M_{?}(G/K) \ar[u]_{b_K} }$$

Where  $D_H$, $D_K$  are  the  duality  isomorphisms  associated  to  the  bases and   $\phi:G/H\to  G/K$ is  a  morphism  in the  orbit  category.

\end{itemize}
 
\end{condition}
Conditions  \ref{conditionD}  are  satisfied  in some  cases:

\begin{itemize}
\item Constant  coefficients $\mathbb{Z}$. 
\item  The  complex representation  ring  functors  defined  on the  family $\mathcal{FIN}$  of  finite  subgroups,  $\calr^ ?$, $\calr_?$. A  computation using  characters as  bases  and  Frobenius  reciprocity yields  conditions \ref{conditionD}.
\item Consider  a   discrete group $G$ and  a    normalized torsion cocycle 
 $$\alpha \in Z^2 (G,S^1),$$ take the  $\alpha$- and  $\alpha^{-1}$ twisted  representation  ring functors $ \calr{_{-\alpha}}_?$  $\calr{_{\alpha}}^ ?$ defined  on the  objects  $G/H$, where  $H$   belongs  to  the family $\calfin$ of  finite  subgroups. Consider  for  every  object  $G/H$  the  cocycles $i_{H}^{*}(\alpha)$, where  $i_{H}:H\to G$ is  the  inclusion, and  assume  without loss  of  generality  that  they  are  normalized and  correspond to a  family  of  Schur  covering  groups in central extensions   $1\to \mathbb{Z}/n_{H} \to H^* \to  H\to  1 $.  

We  select  the set  $\{\beta_{H}\}$  given   as the  set  of  characters of irreducible  representations  of   $H^*$  where  $\mathbb{Z}/n_{H}$  acts by  multiplication  with a primitive  $n_{H}$-th root  of  unity.  Given  a  choice   of  sections   for  the quotient maps  $H^ *\to H$, one  can  construct  isomorphisms $^{i^{*}(\alpha)} {\calr}      (G/H) \overset{\cong}{\rightarrow} \mathbb{Z} [ \{\beta_{H}\}] $. The  orthogonality  relations and Frobenius  reciprocity  for their  twisted characters guarantee that conditions \ref{conditionD} yield.

\end{itemize}

%a_{H}:
%\overset{\cong}{\leftarrow}\calr_{?}(G/H) :b_{H}$

\begin{theorem}\label{theoremUCT}
Let  $X$  be  a  proper,  finite    $G$-CW complex.  Let  $M^ ?$ and  $M_?$  be  a  pair  of functors  satisfying  conditions \ref{conditionD}.  Then, there  exists   a short  exact  sequence  of  abelian  groups

 $$ 0\to {Ext}_{\mathbb{Z  }}  (H_{n-1}^{G} (X, M_?), \mathbb{Z})\to  H^ {n}_{G}(X,  M^?) \to {Hom}_{\mathbb{Z  }}  (H_{n}^{G } (X, M_?), \mathbb{Z}) \to  0 $$ 

\end{theorem}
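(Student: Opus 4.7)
The plan is to reduce to the classical Universal Coefficient Theorem for cochain complexes arising as the $\mathbb{Z}$-dual of a chain complex of finitely generated free abelian groups. Two ingredients need to be established: first, that the Bredon chain complex $C_*^G(X, M_?)$ consists of finitely generated free abelian groups; second, that the Bredon cochain complex $C^*_G(X, M^?)$ can be identified with the $\mathbb{Z}$-dual of $C_*^G(X, M_?)$.

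For the first ingredient, since $X$ is a finite $G$-CW complex there are only finitely many orbits of cells, and the first bullet of Condition \ref{conditionD} implies that for each orbit representative $e_\lambda$ with isotropy $S_\lambda$ the group $M_?(G/S_\lambda)$ is free abelian of finite rank on the basis $\{\beta_{i^{S_\lambda}}\}$. Hence $C_n^G(X, M_?)=\bigoplus_\lambda M_?(G/S_\lambda)$ is a finitely generated free abelian group for every $n$, and Remark \ref{matrix} applies.

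For the duality between the two complexes, the termwise identifications $a_H$ and $b_H$ together with the canonical dual-basis isomorphism produce a natural isomorphism $M^?(G/H) \xrightarrow{\cong} \operatorname{Hom}_\mathbb{Z}(M_?(G/H), \mathbb{Z})$. The commutative diagram in the second bullet of Condition \ref{conditionD} is precisely the assertion that for every morphism $\phi: G/H \to G/K$ of the orbit category, the dual $\operatorname{Hom}_\mathbb{Z}(M_?(\phi), \mathbb{Z})$ corresponds to $M^?(\phi)$ under this identification. Since the Bredon boundary and coboundary are both assembled from these structure maps together with the same cellular incidence data coming from $\underline{\partial}_*$, the termwise identifications fit together into an isomorphism of cochain complexes
$$C^*_G(X, M^?) \cong \operatorname{Hom}_\mathbb{Z}(C_*^G(X, M_?), \mathbb{Z}).$$

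Applying the classical Universal Coefficient Theorem to the chain complex $C_*^G(X, M_?)$ of finitely generated free abelian groups then yields a split short exact sequence which, via the identification of the preceding paragraph, is exactly the asserted sequence. The main obstacle is the bookkeeping in the second ingredient: one has to verify cell by cell that the termwise duality intertwines the Bredon differentials, carefully tracking that the morphisms $\phi$ in the orbit category which occur in the boundary and coboundary formulas are the very same, so that the compatibility supplied by Condition \ref{conditionD} can be applied uniformly across the finite sum over cell orbits.
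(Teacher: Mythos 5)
Your proposal is correct and takes essentially the same route as the paper: both identify the Bredon cochain complex $C^{*}_G(X,M^?)$ with the $\mathbb{Z}$-dual of the finitely generated free chain complex $C_*^G(X,M_?)$ using the bases and the commutative diagram of Conditions \ref{conditionD}, and then apply the universal coefficient argument for complexes of free abelian groups. The only cosmetic difference is that the paper writes out that classical argument explicitly (via the split sequences $0\to Z_n\to C_n\to B_{n-1}\to 0$) where you invoke it as a known theorem.
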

\begin{proof}
The  proof  consists  of  two  steps: 
\begin{itemize}
\item Construction of  chain homotopy  equivalences  
$$\big (C_n^ G (X, M_?) , \delta_n \big ) \to \big ( C_n^ G (X, \widehat{M}), \hat{\delta^n}\big)  $$
$$\big ( C^n_G(X,M^?), \delta^n \big )  \to \big (  (Hom_{\mathbb{Z}}(C_n^G(X, \widehat{M}), \mathbb{Z}) ; Hom_{\mathbb{Z}}(\delta_{n}, \mathbb{Z}) \big )$$

\item Construction of  an exact  sequence $$   0\to \ker h  \to  H^ {n}_{G}(X,  M^?) \overset{h}{\to} {Hom}_{\mathbb{Z  }}  (H_{n}^{G } (X, \widehat{M}), \mathbb{Z}) \to  0  $$ and  identification  of  the  kernel  as  ${\Ext}_{\mathbb{Z  }}  (H_{n-1}^{G} (X, M_?), \mathbb{Z})$.

\end{itemize}

For  the  first chain homotopy  equivalence,  notice  that    given a  set  $\{e_{\lambda}\}$ of   orbit   representatives of  the n-cells of  the  $G$-CW  complex  $X$,  and isotropy  subgroups  $S_{\lambda}$  of  the  cells  $e_{\lambda}$, the   chain  complex  for computing  the  $n$-th Bredon  homology  with  coefficients  in $M_?$  is  given  by  $\underset{\lambda}{\bigoplus}\mathbb{Z}[e_{\lambda}]\otimes  M(G/S_{\lambda})$.  Condition \ref{conditionD} gives  for  every $\lambda$ an  isomorphism of  free  abelian  groups   $ M_{?}(G/S_{\lambda})\to  \widehat{M}(G/S_{\lambda})$ which   gives  a  chain  map  
$$C_n^ G(X, M_?)\cong  \underset{\lambda}{\bigoplus}\mathbb{Z}[e_{\lambda}]\otimes  M_?(G/S_{\lambda})\longrightarrow \underset{\lambda}{\bigoplus}\mathbb{Z}[e_{\lambda}]\otimes  \widehat{M}(G/S_{\lambda})\cong C_n^ G(X, \widehat{M})$$ giving  a chain  homotopy equivalence  and  subsequently an  isomorphism in homology  groups $H_n^ G (X, M_?)\overset{\cong}{\to} H_n^ G (X, \widehat{M})$ . 

For  the  second  isomorphism,  consider  the  chain  map   $$A:C^n_G(X, M^ ?) \to Hom_{\mathbb{Z}}(C_n^G(X, \widehat{M}), \mathbb{Z})$$

  $$A:  Hom_{\mathbb{Z}}\big  ( \underset{\lambda}{\bigoplus} \mathbb{Z}[e_{\lambda}],M^?(G/S_{\lambda} ) \big )   \to Hom_{\mathbb{Z}}\big (  \underset{\lambda}{\bigoplus} \mathbb{Z}[e_{\lambda}] \otimes \widehat{M}(G/S_{\lambda}), \mathbb{Z}\big )  $$

 assigning  to  $\varphi \in C^n_G(X, M^ ?)$ the  homomorphism  $\psi$  defined  on  basis  elements $[e_{\lambda}]\otimes \widehat{\beta}_{i_{S_{\lambda}}}$ as $\widehat{\beta}_{i_{S_{\lambda}}}(\varphi([e_{\lambda}]))\in \mathbb{Z}$.
A chain homotopy   inverse  for  this  map  is  given  by  the   chain map  

$$B: Hom_{\mathbb{Z}}\big (  \underset{\lambda}{\bigoplus} \mathbb{Z}[e_{\lambda}] \otimes \widehat{M}(G/S_{\lambda}), \mathbb{Z}\big )  \to  Hom_{\mathbb{Z}}\big  ( \underset{\lambda}{\bigoplus} \mathbb{Z}[e_{\lambda}],M^?(G/S_{\lambda} ) \big )$$

assigning  to  a   homomorphism $\psi \in Hom_{\mathbb{Z}}\big (  \underset{\lambda}{\bigoplus} \mathbb{Z}[e_{\lambda}] \otimes \widehat{M}(G/S_{\lambda}), \mathbb{Z}\big )  $  the  homomorphism   defined  on the  basis  $[e_{\lambda}]$  as  $\psi([e_{\lambda}])=   \underset{i_{S_{\lambda}}}{\Sigma}\psi( [e_{\lambda}]\otimes \widehat{\beta}_{i_{S_{\lambda}}}) \beta_{i_{S_{\lambda}}}.$
 \\

For  the  second part,  consider  the cochain  complex given  in degree  $n$  by  
$$C_G^n(X;M^ ?)=\big (Hom_{\mathcal{F}_G}(\underline{C}_n(X),M) , \delta^n \big) $$ and  the  chain complex $C_{n}^ G(X, \widehat{M}) = \big (   \underline{C}^{n}(X) \underset{\OO}{\otimes} \widehat{M}   , \delta_n \big ) $. Denote  by  $Z^{n}= \ker{\delta^n}$, $B^{n}= \im{\delta^{n-1}}$ and  $Z_{n}=  \ker \delta_{n}$, $B_{n}=  \im  \delta_{n+1}$.  

A class in $H_n^ G (X, \widehat{M})$ is represented by a homomorphism $\varphi:C_n^ G (X, \widehat{M})\rightarrow \IZ$ such that $Hom_\IZ(\delta_n,\IZ)(\varphi)=0$, that means $\varphi\mid B_n=0$. Then if $\varphi_0=\varphi\mid Z_n$ we have a map defined in the quotient $\overline{\varphi}_0:Z_n/B_n\rightarrow\IZ$, or in others words an element of 
$Hom_\IZ(H_n^ G (X, \widehat{M}),\IZ)$. We have defined a map

$$h: H^ {n}_{G}(X, \widehat{M}) \to {Hom}_{\mathbb{Z  }}  (H_{n}^{G } (X, \widehat{M}), \mathbb{Z}).$$

This map is surjective. Now we proceed to identify $\ker(h)$.

As we have a projective resolution of $H_ {n}^{G}(X, \widehat{M})$
$$0\rightarrow B_n\rightarrow Z_n\rightarrow H_n^G(X, \widehat{M}),$$

the group $\Ext(H_ {n}^{G}(X, \widehat{M},\IZ)$ can be calculated using the exact sequence

\begin{equation}\label{resolution}Hom_\IZ(Z_n,\IZ)\rightarrow Hom_\IZ(B_n,\IZ)\rightarrow\Ext(H_ {n}^{G}(X, \widehat{M},\IZ)\rightarrow0.\end{equation}
 On the other hand,  we have the following long exact sequences
 \begin{equation}\label{ex1}
 0\rightarrow Z_n\rightarrow C_n^ G(X, \widehat{M})\xrightarrow{\delta}B_{n-1}\rightarrow0
 \end{equation}
 
\begin{equation}\label{ex2}
 0\rightarrow H_ {n}^{G}(X, \widehat{M})\rightarrow C_n^ G(X, \widehat{M})/B_n\xrightarrow{\delta}B_{n-1}\rightarrow0
 \end{equation}

The sequences \ref{ex1} and \ref{ex2}  are split because $B_{n-1}$ is free abelian.

We can decompose $\delta$ as the map 
\begin{multline}
\label{ex3}
 0\longrightarrow C_n^ G(X, \widehat{M}) \\ \longrightarrow C_n^ G(X, \widehat{M})/B_n\longrightarrow C_n^ G(X, \widehat{M})/Z_n \overset{\cong}{\longrightarrow} \\ B_{n-1}\subset Z_{n-1}\subset C_{n-1}^ G(X, \widehat{M})
 \end{multline}
We have the following diagram of exact sequences (here $H_n=H_ {n}^{G}(X, \widehat{M})$ and $C_n=C_n^{G}(X, \widehat{M})$).

\begin{equation}\label{ex4}
  $$\xymatrix{ &   & 0\\
   & & Hom_\IZ(H_ {n},\IZ)\ar[u]\\
  Hom_\IZ(B_n,\IZ) & Hom_\IZ(C_n,\IZ)\ar[l] & Hom_\IZ(C_n/B_n,\IZ)\ar[l]\ar[u] & 0\ar[l] \\
  0 & \Ext(H_ {n},\IZ)\ar[l]&Hom_\IZ(B_{n-1},\IZ)\ar[u]\ar[l] &  Hom_\IZ(Z_{n-1},\IZ)\ar[u]\ar[l]\\
  & & 0\ar[u] &  Hom_\IZ(C_ {n-1},\IZ)\ar[lu]\ar[u]
  }$$
\end{equation}

The right vertical sequence is exact because \ref{ex1} splits, the left vertical sequence is split exact because \ref{ex2} is. The upper horizontal sequence is obtained from applying $Hom$ to the projective resolution of 
$H_ {n}^{G}(X, \widehat{M})$ and the lower horizontal sequence is \ref{resolution}.

Analyzing the diagram \ref{ex4} and  decomposition \ref{ex3} we obtain

$$Z^n\cong Hom_\IZ(C_n(X, \widehat{M})/B_n,\IZ),$$

on the other hand $$B^n\cong im(Hom_\IZ(Z_{n-1},\IZ)\rightarrow Hom_\IZ(C_n(X, \widehat{M})/B_n,\IZ).$$
Then $$H^{n}_{G}(X,M^?)\cong\coker(Hom_\IZ(Z_{n-1},\IZ)\rightarrow Hom_\IZ(C_n(X, \widehat{M})/B_n,\IZ)).$$

The  maps 
$Hom_\IZ(Z_{n-1},\IZ)\to Hom_\IZ(B_{n-1},\IZ)\to Hom_\IZ(C_n(X, \widehat{M})/B_n,\IZ)$  together  with the  section 
$Hom_\IZ(C_n(X, \widehat{M})/B_n,\IZ)\to Hom_\IZ(B_{n-1},\IZ)$  induce  an exact  sequence  of cokernels  identifying  $\ker{h}$  with $Ext_\IZ(H_{n-1}^G(X, M_?), \IZ)$.

\end{proof}
\section{ Spectral  sequences  for  Twisted  Equivariant  $K$-Theory. }

Twisted  equivariant  $K$-theory  for  proper  and  discrete  actions   has  been  defined  in a  variety  of  ways. For  a  torsion  cocycle  $\alpha \in Z^{2}(G,  S^ {1})$, it is possible to define twisted equivariant $K$-theory  in  terms  of  finite  dimensional, so  called  $\alpha$-twisted  vector  bundles, an $\alpha$-twisted  equivariant  $K$-theory  for  proper  actions of  discrete  groups  on finite, proper $G$-CW  complexes.

\begin{definition}
Let  $\alpha\in Z^ {2}(G,S^1)$  be a  normalized  torsion cocycle  of order  $n$   for  the  discrete  group  $G$,  with associated  central  extension  $0\to  \mathbb{Z}/{n} \to  G_{\alpha}\to  G$.  An  $\alpha$-twisted  vector  bundle  is  a  finite  dimensional $G_{\alpha}$-  equivariant  complex vector bundle  such  that  $\mathbb{Z}/n$  acts  by  multiplication  with a  primitive  root  of  unity.  The   groups $^{ \alpha }\mathbb{K}^{0}_{G_\alpha}( X)$ are  defined  as   the  Grothendieck  groups  of  the  isomorphism classes of  $\alpha$-twisted  vector  bundles over  $X$.
  
\end{definition}
 Given  a proper  $G$-CW  complex $X$,  define  the  $\alpha$-twisted  equivariant  $K$-theory  groups $^{\alpha}K^{-n}_{G}(X)$  as  the  kernel  of  the  induced  map 

$$ ^{ \alpha }\mathbb{K}^{0}_{G_\alpha}( X\times S^{n})   \overset{{\rm incl}^{*}}{\to}       {  ^{ \alpha }\mathbb{K}^{0}_{G_\alpha}( X)} $$
 
The  $\alpha$-twisted  twisted  equivariant  $K$-theory   catches  relevant  information  to  the class  of  twistings coming from  the  torsion  part  of  the  group  cohomology  of  the  group, in the  sense  that  the $K$-groups  trivialize  for  cocycles  representing non-torsion  classes.

As  noted in \cite{barcenasespinozauribevelasquez}, there is a spectral sequence connecting the $\alpha$-twisted Bredon cohomology and the $\alpha$-twisted
equivariant K-theory of finite proper $G$-CW complexes. When the twisting is discrete this spectral sequence is a special case of the Atiyah-Hirzebruch spectral sequence  for \emph{untwisted} $G$-cohomology theories constructed by Davis and L\"uck \cite{davisluck}. In particular, it  collapses  rationally.
\begin{theorem}\label{spectral} Let $X$ be a finite proper $G$-CW complex for a discrete group $G$, and let $\alpha\in Z^2(G,S^1)$ be a
normalized torsion cocycle. Then there is a spectral sequence with

$$E_2^{p,q}=\begin{cases}
  H^p_G(X,\calr_{\alpha}^?)&\text{ if $q$ is even}\\
                     0 &\text{ if $q$ is odd}
\end{cases}$$
so that $E_\infty^{p,q}\Rightarrow { }^\alpha K^{p+q}_G(X)$.
\end{theorem}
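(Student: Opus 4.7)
The plan is to realise $\alpha$-twisted equivariant $K$-theory as a $G$-equivariant cohomology theory in the sense of Davis--L\"uck, and then feed it into their general Atiyah--Hirzebruch machinery. The three ingredients that need to be assembled are: (i) the axiomatic framework, (ii) the spectral sequence itself, and (iii) the identification of the $E_2$-page.

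First I would verify that the functor $X\mapsto {}^{\alpha}K_G^{*}(X)$ on the category of finite proper $G$-CW pairs satisfies the axioms of an $\Or(G,\calfin)$-equivariant cohomology theory: $G$-homotopy invariance, a long exact sequence for pairs, excision, the suspension isomorphism, and the disjoint union axiom. For discrete torsion twistings all of these have been established in \cite{barcenasespinozajoachimuribe} using the description via $\alpha$-twisted vector bundles; I would simply collect and quote them. The crucial consequence is that, via the central extension $1\to \IZ/n\to G_\alpha\to G\to 1$ and the definition in terms of $G_\alpha$-equivariant bundles where $\IZ/n$ acts by a primitive root of unity, the theory ${}^{\alpha}K_G^{*}$ fits the Davis--L\"uck formalism.

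Second, I would invoke the Davis--L\"uck construction from \cite{davisluck}: to every $G$-cohomology theory $\mathcal{H}^{*}$ and every proper $G$-CW complex $X$, the skeletal filtration produces a strongly convergent spectral sequence
\begin{equation*}
E_2^{p,q}=H^{p}_{G}(X;\underline{\pi}^{q})\Longrightarrow \mathcal{H}^{p+q}(X),
\end{equation*}
where $\underline{\pi}^{q}$ is the contravariant coefficient system on $\Or(G,\calfin)$ given on objects by $G/H\mapsto \mathcal{H}^{q}(G/H)$ with restriction maps as structure maps. Specialising to $\mathcal{H}^{*}={}^{\alpha}K_G^{*}$ immediately yields a spectral sequence converging to ${}^{\alpha}K_G^{p+q}(X)$, and finiteness of $X$ together with finite isotropy gives the convergence statement.

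Third, I would identify the coefficient system. For each finite subgroup $H\leq G$, one has ${}^{\alpha}K_G^{q}(G/H)\cong {}^{i_H^{*}\alpha}K^{q}_{H}(\mathrm{pt})$, which by Bott periodicity is concentrated in even degrees, with value the $i_H^{*}\alpha$-twisted representation group ${}^{i_H^{*}\alpha}\calr(H)$; in particular $\underline{\pi}^{q}=0$ for $q$ odd. For a morphism $\phi:G/H\to G/K$ in the orbit category the structure map is induced by restriction of $\alpha$-twisted representations along the associated subgroup inclusion, so $\underline{\pi}^{q}\cong \calr_{\alpha}^{?}$ as contravariant functors for every even $q$. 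Substituting this description into the $E_2$-page produces exactly the asserted expression.

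The main obstacle is point (i): making sure the version of $\alpha$-twisted equivariant $K$-theory used here, the $G_\alpha$-equivariant bundle description, agrees with an $\Or(G,\calfin)$-cohomology theory in the precise form required by Davis--L\"uck, and that its values on orbits $G/H$ are identified naturally with $^{i_H^{*}\alpha}\calr(H)$ as contravariant functors in $G/H$. Once this naturality is in place, steps (ii) and (iii) are formal.
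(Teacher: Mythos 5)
Your proposal is correct and follows exactly the route the paper itself indicates: the theorem is quoted from \cite{barcenasespinozauribevelasquez}, and for discrete-torsion twists it is obtained as the Davis--L\"uck Atiyah--Hirzebruch spectral sequence applied to ${}^{\alpha}K_G^{*}$ viewed as an $\Or(G,\calfin)$-cohomology theory, with coefficient system $G/H\mapsto{}^{i_H^{*}\alpha}\calr(H)$ identified via Bott periodicity. Your step (i), checking the axioms and the naturality of the identification on orbits, is precisely the content delegated to the cited references, so nothing is missing.
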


In \cite{barcenasespinozajoachimuribe},  a  definition  for  twisted  equivariant  $K$-Theory  is  proposed, where   the  class  of  twistings  is  extended  to   include  arbitrary  elements  in the  third Borel cohomology  group with  integer  coefficients $H^{3}(X{\times}_G{\rm E} G, \mathbb{Z})$. Extending  the  work  by \cite{dwyer2008}, this  theory  gives  non trivial twisted  equivariant  $K$-theory  groups  for cocycles which  are  non  torsion.  In  the  work \cite{barcenasespinozauribevelasquez},  a   spectral  sequence  is  developed  to   compute  the  twisted  equivariant  $K$-theory  under  these conditions,  in terms  of  generalizations  of  Bredon  cohomology  which capture  more  general  twisting data.  Specializing to  the trivial group $\{e\}$, the spectral sequence of \cite{barcenasespinozauribevelasquez} yields explicit descriptions of both the  $E_2$ term and  the  differentials of the non-equivariant  Atiyah-Hirzebruch spectral sequence  described in \cite{atiyahsegalcohomology}. In contrast  to  the  spectral  sequence  constructed  in \cite{dwyer2008},  the  spectral  sequence   in \cite{barcenasespinozauribevelasquez}  does  not  collapse  rationally in general.

\section{Cohomology  of  $\sldrei$ and twists  }\label{sectiontwists}
We  concentrate  now  in the  example  of  the  group  $\sldrei$. This  group  is  particularly  accessible  due  to the  existence  of  a convenient  model  for the  classifying  space  of  proper  actions  and  the   fact  that  the  group  cohomology  relevant  to the  twists is  completely  determined  by  finite  subgroups. 
 
\subsection{Twist  Data  }
We  will  describe  the  twist data, which  reduce completely  to  torsion  classes.  After  the  work  of  Soul\'e, Theorem 4, page  14  in \cite{Soule(1978)} the   integral  cohomology  of  $\sldrei$ only  consists  of  $2$ and  $3$-torsion.  The  3-primary  part  is  isomorphic  to  the  graded   algebra 
$$\mathbb{Z}[x_{1}, x_{2}]\mid  3x_{1}=3x_{2}=0$$     
 with  both  generators  in  degree 4. 

The  two-primary  component  is  isomorphic  to  the   graded  algebra  
$$\mathbb{Z}[u_{1}, \ldots, u_{7}]$$
with    respective degrees 3,3,4, 4,5, 6,6, subject to the   relations 
$$2u_{1}=2u_{3}=4 u_{3}=4u_{4}=2u_{5}=2u_{6}=2u_{7}=0$$
$$\so{7}{} \so{1}{} = \so{7}{}\so{4}{}=\so{7}{}\so{5}{}=\so{7}{}\so{6}{}= \so{2}{} \so{5}{}= \so{2}{} \so{6}{}=0 $$
$$\so{7}{2}+\so{7}{}\so{2}{2}=\so{3}{}\so{4}{}+\so{1}{} \so{5}{}=\so{3}{} \so{6}{}+\so{3}{}\so{1}{2}=\so{3}{}\so{6}{}+ \so{5}{2}=0$$
$$\so{1}{} \so{6}{}+\so{4}{}\so{5}{}=\so{3}{0} \so{4}{2}+\so{6}{2}=\so{5}{} \so{6}{}+ \so{5}{} \so{1}{2}=0$$

The  twists  in  equivariant  $K$-theory  are  given  by  the  classes  in  $H^{3}(\sldrei, \mathbb{Z})$.  For  this  reason  we  shall  restrict  to  the  two-primary  component  in the  integral  cohomology. In  order  to  have  a  local  description  of  these  classes,  we   describe   the cohomology  of  some  finite  subgroups  inside $\sldrei$. Again Theorem 4 in page 14 of \cite{Soule(1978)}, gives the following result:
There exists an exact sequence of abelian groups ($n\in \IN$)
$$0\rightarrow H^n(\sldrei)_{(2)}\xrightarrow{\phi}H^n(S_4)_{(2)}\oplus H^n(S_4)_{(2)}\oplus H^n(S_4)_{(2)}\xrightarrow{\delta} H^n(D_4)\oplus H^n(\IZ_2)\rightarrow 0$$
where $\phi$  and $\delta$ (see     Corollary  2.1.b in  page  9  of  \cite{Soule(1978)})  are determined  by the system of inclusions

  $$\xymatrix{&&\sldrei\\S_4\ar[urr] & & S_4\ar[u]& & S_4\ar[ull]\\
  & D_4\ar[ul]^{i_2}\ar[ur]_{i_1}& & C_2\ar[ul]^{j_1}\ar[ur]_{j_2}\\
    } $$
  
Denote $R= i_1^*(H^*(S_{4}))\cap   i_2^*(H^*(S_{4}))$. Then, the image of the morphism $\phi: H^*(\sldrei)_{(2)}\rightarrow H^*(S_4)_{(2)}\oplus (i_1^*)^{-1}(R)$, is the set of elements $(y, z)$ such that 
  $j_2^*(y) = j_1^*(z).$ From  Soul\'e's work, we know that $H^*(S_4)_{(2)} = \IZ[y_1, y_2,y_3]$, with 
  $2y_1 = 2y_z = 4y_3 = y_1^4+ y_2^2y_1 + y_3y_1^2= 0$, and, if $R$ is as above, then
  $(i_1^*)^{-1}(R)=\IZ[z_1, z_2, z_3]$, with $2z_1 = 4z_2 = 2z_3 = z_3^2 + z_3z_1^2 = 0$. Furthermore $j_2^*(y_1) = t$, $j_2^*(y_2) = 0,$
$j_2^*(y_3) = t^2$, $j_1^*(z_1) = 0$, $j_1*(z_2) = t^2$, and $j_1^*(z_3) = 0$. Then the elements $u_1 = y_2$, $u_2 = z_1$, $u_3 =y_1^2+ z_2$, $u_4 = y_1^2+ y_3$, $u_5 = y_1y_2$,  $u_6 = y_1y_3 + y_1^3$ and $u_7 = z_3$ generate $H^*(\sldrei)_{(2)}$.
  
  In $H^3(\ )$ the above discussion can be summarized in the following diagram

  \begin{equation}\small
  \xymatrix{&  \langle u_1,u_2\rangle=H^3(\sldrei)\ar[dl]^{i^*}\ar[d]^{i^*}\ar[dr]^{i^*}\ar[drr]^{i^*}\\
  \langle z_1\rangle\subseteq H^3(S_4)\ar[d]^{i_1^*} &\langle z_1\rangle\subseteq H^3(S_4)\ar[dl]^{i_2^*}\ar[d]^{j_1^*} & \langle y_2\rangle \subseteq H^3(S_4)\ar[dl]^{j_2^*}
  \ar[d]&\langle y_2\rangle \subseteq H^3(D_6)\ar[dl]\\
   \langle x_3\rangle \subseteq H^3(D_4)\ar[d]& 0 & \langle y_2\rangle \subseteq H^3(D_2)\\
   \langle x_3\rangle \subseteq H^3(D_2)
  }
  \end{equation}
  There are  four twists up to cohomology, namely $0,u_1,u_2,u_1+u_2$, we will work with $u_1$.
\subsection{A  model  for  the  classifying space  of  proper actions}

We  recall  the  model  for  the  classifying space  for  proper  actions of  $\sldrei$,  as  described  in \cite{Soule(1978)},  but  also  in  
\cite{Sanchez-Garcia(2006SL)}. Let  $Q$  be  the   space  of real, positive   definite $3\times  3$-square   matrices. Multiplication  
by  positive  scalars  gives  an  action  whose  quotient  space  $Q/ \mathbb{R}^{+}$  is   homotopy  equivalent  to  $\sldrei /\eub{\sldrei}$. 

We  describe its  orbit  space. Let  $C$  be  the  truncated  cube   of  $\mathbb{R}^{3}$  with  centre  $(0,0,0)$ and  side  length 
$2$,  truncated  at  the  vertices   $(1,1,-1), (1, -1,1),(-1,1,1)$  and  $(-1,-1,-1)$,  trough  the  mid-points  of  the   
corresponding sides. As  stated  in \cite{Soule(1978)}, every matrix  $A$  admits  a  representative of  the  form   

\[ \left( \begin{array}{ccc}
2 & z & y \\
z & 2 & x \\
y & x & 2 \end{array} \right)\] 

which  may  be  identified  with  the  corresponding  point  $(x,y,z)$  inside  the  truncated  cube. We  introduce  the  following  
notation  for  the  vertices of  the  cube: 
\[ \begin{array}{ccc}
 O=(0,0,0) & Q=(1,0,0)\\
M=(1,1,1) & N=(1,1,1/2)\\
M^{'}=(1,1,0) & N^{'}= (1,1/2, -1/2)\\
P=(2/3,2/3, -2/3) & \\
\end{array}
\]

\begin{figure}[h]\label{figure1}
\caption{Triangulation  for  the  fundamental  region.}
\includegraphics[scale=15]{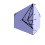}

\end{figure}

Note that the elements of $\sldrei$  
$$q_1=\begin{pmatrix}1&0&0
                                    \\0&0&-1
                                    \\0&1&1\end{pmatrix}
    q_2=\begin{pmatrix}-1&0&0
                                    \\0&1&1
                                    \\0&0&-1\end{pmatrix}$$send  the  triangle  $(M,N,Q)$  to the  triangle  $(M^{'}, N^{'} Q)$ and  the  quadrilateral 
 
$(N,N^{'},M^{'},Q)$  to $(N^{'}, N, M^{'}, Q)$. Thus, the  following identification  must  be  performed in the  quotient: 
$M\cong M^{'}$, $N\cong N^{'}$, $QM\cong  QM^{'}$, $QN\cong QN^{'}$, $MN\cong  M^{'}N^{'}\cong M^{'}N$ and   
$QMN\cong QM^{'}N\cong QM^{'}N^{'}$.

Following \cite{Sanchez-Garcia(2006SL)} we now describe the orbits of cells and corresponding stabilizers. This can be found also in Theorem 2 of Soul\'e's article \cite{Soule(1978)}
(although we use a cellular structure instead of a simplicial one). We have changed
the chosen generators so that they agree with the presentations on section 4. We summarize the information on
Table 1. We use the following notations: $\{1\}$ denotes the trivial group, $C_n$ the cyclic group
of $n$ elements, $D_n$ the dihedral group with $2n$ elements and $S_n$ the Symmetric group of permutations on $n$ objects.
\vspace{0.5cm}
\begin{center}
\begin{tabular}{ccccccccc}
\hline
vertices & & & & &2-cells\\
\hline
$v_1$ & $O$ & $g_2$, $g_3$ & $S_4$& &$t_1$ & $OQM$ & $g_2$ & $C_2$\\
$v_2$ & $Q$ & $ g_4$, $g_5$& $D_6$&&$t_2$ & $QM'N$ & $g_1$ & $\{1\}$\\
$v_3$ & $M$ &  $g_6$, $g_7$&  $S_4$& &$t_3$ & $MN'P$ & $g_{12},g_{14}$ & $C_2\times C_2$\\
$v_4$ & $N$ & $ g_6$, $ g_8$ & $ D_4$&& $t_4$ & $OQN'P$ & $g_5$ & $C_2$\\
$v_5$ & $P$ & $g_5$ , $g_9$ & $ S_4$ && $t_5$ & $OMM'P$ & $g_6$ & $C_2$\\
\hline
edges&&&&& 3-cells\\
\hline
$e_1$ & $OQ$ & $g_2$, $g_5$ &  $C_2\times C_2$ && $T_1$& $g_1$ & $\{1\}$\\
$e_2$ & $OM$ & $g_6, g_{10}$ & $D_3$\\
$e_3$ & $OP$ & $ g_6, g_5$ & $D_3$\\
$e_4$ & $QM$ & $g_2$ & $C_2$\\
$e_5$ & $QN'$ & $g_5$ & $C_2$\\
$e_6$ & $MN$ & $g_6, g_{11}$ & $C_2\times C_2$\\
$e_7$ & $M'P$ & $g_6, g_{12}$ & $ D_4$\\
$e_8$& $ N'P$ & $ g_5, g_{13}$ & $ D_4$\\

\hline

\end{tabular}
\end{center}
\vspace{0.5cm}

The first column is an enumeration of equivalence classes of cells; the second lists a representative of each class;
the third column gives generating elements for the stabilizer of the given representative; and the last one is the
isomorphism type of the stabilizer.  The
generating elements referred to above are

$$g_1=\begin{pmatrix}1&0&0\\
                                    0&1&0\\
                                    0&0&1\end{pmatrix}                    
     \hspace{1cm}g_2=\begin{pmatrix}-1&0&0\\
                                    0&0&-1\\
                                    0&-1&0\end{pmatrix}                               
      \hspace{1cm}g_3=\begin{pmatrix}0&0&1\\
                                    0&1&0\\
                                    -1&0&0\end{pmatrix}$$
                                    
$$g_4=\begin{pmatrix}-1&0&0\\
                                    0&1&1\\
                                    0&0&-1\end{pmatrix}                    
     \hspace{1cm}g_5=\begin{pmatrix}-1&0&0\\
                                    0&0&1\\
                                    0&1&0\end{pmatrix}                               
      \hspace{1cm}g_6=\begin{pmatrix}0&-1&0\\
                                    -1&0&0\\
                                    0&0&-1\end{pmatrix}$$
                                    
$$g_7=\begin{pmatrix}0&0&-1\\
                                    -1&0&0\\
                                    1&1&1\end{pmatrix}                    
     \hspace{1cm}g_8=\begin{pmatrix}-1&0&0\\
                                    0&1&0\\
                                    0&-1&-1\end{pmatrix}                               
      \hspace{1cm}g_9=\begin{pmatrix}0&0&-1\\
                                    -1&0&-1\\
                                    0&1&1\end{pmatrix}$$
                                    
$$g_{10}=\begin{pmatrix}0&0&-1\\
                                    0&-1&0\\
                                    -1&0&0\end{pmatrix}                    
     \hspace{1cm}g_{11}=\begin{pmatrix}-1&0&0\\
                                    0&-1&0\\
                                    1&1&1\end{pmatrix}                               
      \hspace{1cm}g_{12}=\begin{pmatrix}0&-1&-1\\
                                    0&-1&0\\
                                    -1&1&0\end{pmatrix}$$
                                    
$$g_{13}=\begin{pmatrix}0&1&1\\
                                    1&0&1\\
                                    0&0&-1\end{pmatrix}                    
     \hspace{1cm}g_{14}=\begin{pmatrix}-1&0&0\\
                                    -1&0&-1\\
                                    1&-1&0\end{pmatrix}$$                                    
                                    
Finally we describe the cells coboundary, we fix an orientation; namely, the ordering of the
vertices $O < Q < M < M' < N < N' < P$ induces an orientation in $E$ and also in   $\underline{B}\sldrei = E/\equiv$. We  resume  this  in  Figure 1.

\section{Representation theoretical  input}\label{sectionrepresentation}
\subsection{Cyclic  group $C_{2}$}  
The  cyclic  group    has  no  third  dimensional  integer  cohomology,  hence  the twisted  representation  ring   coincides  with the  usual one. 
The   character  table  is  as  follows:
\[\begin{array}{ccc}
C_{2}& 1 & g_{i} \\   
\rho_{1} & 1 &  -1\\ 
\rho_{2} & 1 & -1
  \end{array}
\]

\subsection{Dihedral  group  $D_{n}=\langle g_{i},  g_{j}\rangle =\langle g_{i}, g_{j} \mid g_{i}^{2}=g_{j}^{2} =(g_{i}g_{j})^{n}=1\rangle$ }   

The  dihedral group   of  order   six  has  only   the trivial  class  in  3  dimensional  integer  cohomology.  Thus  the  projective   representations  do  agree  with  the  linear  ones. In  the  even case,  the  subgroups  inside  $\sldrei$  are  $C_{2}\times  C_{2}=D_{2}$ and  $D_{4}$. 

The  following  is  the linear   character  table for  $D_{n}$: 
  
\[\begin{array}{cccc}
D_{n}& \langle (g_{i}, g_{j})^{k}\rangle  & \langle g_{j}(g_{i} g_{j})^{k} \rangle \\ 
\xi_{1} & 1 & 1\\   
\xi_{2}  & 1 &  -1\\ 
\hat{\xi_{3}} &  -1^{k}& -1^{k}\\
 \hat{\xi_{4}} & -1^{k} & -1^{k+1}\\
\hat{\phi_{p}} & 2\cos(2 \pi p  k/n) &  0 
  \end{array}
\]

where $0\leq k\leq n-1$,  p  varies  from  1  to  $(n/2) -1$ ( $ n$  even)  or  $(n-1)/2$ ( $n$ odd) and  the  hat  denotes  a   representation which  only  appears  in the   case  n  even. 
Most  of the information  is  taken  from  \cite{karpilovsky3},  Chapter  5  Section  7,  specially  Theorem  7.1  and   Corollary  7.2  in pages   258-261.  The  group $H^{2}(D_{n}, S^{1})$  is  isomorphic  to  $\mathbb{Z}/2$ if   n  is  even  and  $0$  if  $n$ is  odd. From  now  on  we  concentrate  in the  case  $n$  even. 
For  simplicity  consider  the  following  presentation  

$$D_{n}=\langle a,b\mid a^{n}=1, b^{2}=1, bab^{-1}=a^{-1}\rangle $$

Given  a  primitive  $nth$  root  of  unity  $\epsilon \in S^{1}$,  one  can  normalize  a   nontrivial  cocycle $\alpha: D_{n}\times D_{n}\to S^{1}$  to   one  satisfying  $\alpha(a^{i},a^{j}b^{k})=1$  and  $\alpha(a^{i}b, a^{j}b^{k})=\epsilon^{j}$. 
For $r\in \{1,2\ldots, n/2\}$  let  

\[ A_{r}=\left( \begin{array}{cc}
                 \epsilon^{r} & 0\\
                  0 & \epsilon^{1-r}
                \end{array}\right) \]
 and  
\[ B_{r}= \left( \begin{array}{cc}
          0& 1\\
	  1&0
         \end{array}\right) \]

Then, the  irreducible  $\alpha$-twisted  representations of  $D_{n}$  are  given  by   $\rho_{r}(a^{i}b_{j})=A^{i}_{r} B^{j}_{r}$  for  $i\in \{0,\ldots, n-1\}$   $j=0,1$,  and  $r\in \{1,\ldots,n/2\}$. The  projective representations $\rho_{j}$ for   $j\in \{1,\ldots, \frac{n}{2}\}$ are  nonequivalent  irreducible  projective  representations.

Consider  the  group 

$$D_2^*=\langle h_1, h_3, z\rangle$$
which is  isomorphic  to the  quaternions. 
A  linear  character  table  is  given  by   
\[
\begin{array}{cccccc}
 D_2^*& 1   & z   & \{h_1, h_1^{-1}\}&  \{h_3,h_3^{-1}\} &\{h_1h_3, (h_1h_3)^{-1}\} \\
  \eta_{1}     &   1        &   1           &    1     &  1       &    1         \\
  \eta_{2}     &   1        &   1           &   1     &  -1       &    -1          \\
  \eta_{3}     &   1        &   1           &    -1     &  1       &   -1          \\
  \eta_{4}     &   1        &   1           &    -1     &  -1      &    1         \\
  \eta_{5}     &   2        &   -2           &   0     &  0     &    0          \\
\end{array}
\]

The linear  character table for a Schur covering  group of $D_6$ is

 \[
\begin{array}{cccccccccc}
 D_{6}^*            &   1        &   z           &    a      &  za      &  a^2     &  za^2     &   a^3  & b     &    ab\\
  \gamma_{1}     &   1        &   1           &    1     &  1       &    1        &        1    &     1     &  1    &     1      \\
  \gamma_{2}     &   1        &   1           &    1     &  1       &    1        &        1    &     1     &  -1   &     1  \\
  \gamma_{3}     &   1        &   1           &   -1     & -1       &    1        &        1    &     -1    &   1  &     -1   \\
  \gamma_{4}     &   1        &   1           &    -1    &  -1      &    1        &       1     &     -1    &   -1 &      1   \\
  \gamma_{5}     &    2        &   2           &   -1     &  -1     &    -1       &       -1    &     -2    &   0  &      0   \\
  \gamma_{6}     &   2        &   2           &    -1     &   -1    &    -1       &       -1    &  {2}     &   0  &      0\\
  \gamma_{7} &2&-2&\frac{3+i\sqrt{3}}{2}&-\frac{3+i\sqrt{3}}{2}&\frac{1+i\sqrt{3}}{2}&-\frac{1+i\sqrt{3}}{2}&0&0&0\\
  \gamma_{8}     &   2      &  -2 &    0     &    0     &   -1-i\sqrt{3}&1+i\sqrt{3}&    0      &    0&0    \\
  \gamma_{9}  &   2      &  -2 &-\frac{3+i\sqrt{3}}{2} & \frac{3+i\sqrt{3}}{2}&\frac{1+i\sqrt{3}}{2}& -\frac{1+i\sqrt{3}}{2}  &    0      &    0&0    \\

\end{array}
\]

\subsection{Symmetric  group $S_{4}$ }\label{s4}
The  projective  representation  theory  of  the  Symmetric  groups  goes  back  to the  foundational  work  of  Schur \cite{schur}. The  information  concerning  the  representation  theory  of  the  Symmetric  group  in  four  letters  is  taken  from  \cite{Hoffman-Humphreys}, p.  46  and  \cite{karpilovsky3}, pages  215-243 . 
Recall  that  the  conjugacy  classes  inside  the   group  $S_{4}$ are  determined  by  their  cycle   type. The  cycle  type  of  a  permutation $\pi$  is  a  sequence  $(1^{a_{1}},2^{a_{2}}, \ldots, k^{a_{k}})$, where  the  cycle  factorization  of $\pi$  contains  $a_{i}$-cycles  of length  $i$. 

The  symmetric  group   admits  the  presentation 
$$ S_{4}= \langle  g_{1}, g_{2}, g_{3} \mid g_{i}^2={(g_{j}g_{j+1})}^3= {(g_{k}g_{l})}^2=1\rangle$$ 
 
$$1\leq i\leq 2, j=1, k\leq l-2$$
where  $g_{i}$ is the  transposition given by $(i-1, i)$. 

The  linear   character  table  of  $S_{4}$  is  as  follows: 
\[\begin{array}{cccccc}
   S_{4}       &   (1^{4}) & (2, 1^{2}) & (3,1)  & (4)   & (2^{2})\\
\theta_{1}     &    1      &     1     &    1    &   1   &   1   \\
\theta_{2}     &   1      &    -1      &    1    &   -1  &  1 \\
\theta_{3}     &   2      &     0      &    -1    &  0   &   2  \\
\theta_{4}     &   3      &      1     &     0   &   -1  &  -1 \\
\theta_{5}     & 3        &     -1     &    0   &   1    &   -1 \\
\end{array}
\]
The  representations  $\theta_{1}$  and  $\theta_{2}$  are  induced  from  the 1  dimensional trivial, respectively  the  sign  representation.  $\theta_{3}$  is  obtained from the  2-dimensional representation of  the  quotient  group $S_{3}$  the  character  $\theta_{4}$  is  given  as  the   $\xi- \theta_{1}$,  where  $\xi$  is  induced  from  the  $S_{3}$-trivial  representation, and   the  character  $\theta_{5}$  is the  character  assigned  to the  representation $g\mapsto\theta_{2}(g)V_{4}(g)$, where  $V_{4}$  is  the  irreducible  representation  associated  with  the  character $\theta_{4}$.    
The linear  character  table  of a  Schur  covering   group ${S_{4}^*}$    is  obtained  in page  254  of  \cite{karpilovsky3} by considering  the  group   with the  presentation
$$S_4^*=\langle h_1,h_2,h_3,z\mid h_i^2=(h_jh_{j+1})^3=(h_kh_l)^2=z, z^2=[z,h_i]=1\rangle$$ 
 $$1\leq i\leq 3,  j= 1, k\leq l-2$$

 and  the  central  extension 
$$1\to \langle z\rangle \to S_4^*\overset{f}{\to} S_4\to 1$$
given by $f(h_i)=g_i$, as  well as the  choice  of  representatives   of  regular conjugacy  classes  as  below.     
\[
\begin{array}{ccccccccc}
 S_{4}^*& (1^{4})   & (1^{4})^{'}   & (2,1^{2})&  (2^{2}) &  (3,1)  &  (3,1)^{'} &  (4)  & (4)^{'}\\
  \epsilon_{1}     &   1        &   1           &    1     &  1       &    1        &        1    &     1     &  1      \\
  \epsilon_{2}     &   1        &   1           &   -1     &  1       &    1        &        1    &     -1    &  -1     \\
  \epsilon_{3}     &   2        &   2           &    0     &  2       &   -1        &       -1    &     0     &   0     \\
  \epsilon_{4}     &   3        &   3           &    1     &  -1      &    0        &       0     &     -1    &   -1    \\
  \epsilon_{5}     &   3        &   3           &   -1     &  -1      &    0        &       0     &     1     &   1     \\
  \epsilon_{6}     &   2        &  -2           &    0     &   0      &   1         &       -1    &  \sqrt{2} &-\sqrt{2}\\
  \epsilon_{7}     &   2        &  -2           &    0     &   0      &   1         &       -1    & -\sqrt{2} & \sqrt{2}\\
  \epsilon_{8}     &   4        &  -4           &    0     &    0     &   -1        &       1     &    0      &    0    \\

\end{array}
\]
 where  the   first  five   lines  are  characters   associated  to  $S_{4}$, $\epsilon_{6}$  is  the  \emph{Spin } representation.

\section{Untwisted Equivariant  $K$-Theory.}\label{sectionuntwisted}

In this section we use the Atiyah-Hirzebruch spectral sequence to calculate the equivariant K-theory groups $K^*_{\sldrei}(\underbar{E}\sldrei)$. 
The cochain complex associated to the $\sldrei$-CW-complex structure of $\underbar{E}\sldrei$ described in section \ref{sectiontwists} is:

$$0\rightarrow\bigoplus_{k=1}^5R(stab(v_k))\xrightarrow{\Phi_1}\bigoplus_{j=1}^8R(stab(e_j))\xrightarrow{\Phi_2}
\bigoplus_{i=1}^5R(stab(t_i))\xrightarrow{\Phi_3} R(stab(T))\rightarrow0.$$
Where the $\Phi_i$ is the coboundary given by restriction over representations rings. 

Frobenius reciprocity implies that if $H'$ is a subgroup of $H$ and $$\uparrow_H'^H:R(H')\rightarrow R(H)$$
is represented by a matrix $A$ (in the basis given by irreducible representations), then $$\downarrow_H'^H:R(H)\rightarrow R(H')$$
is represented by the  transpose  matrix  $A^T$ (in the same basis).  Given $A_i$ are the matrix representing the morphism  for  $K$-homology  calculated in \cite{Sanchez-Garcia(2006SL)}, one  has  that  $A_i^T$ are the matrices representing the morphism $\Phi_i$. 
 
We calculate the elementary divisors of the matrices representing the morphism $\Phi_i$. Using that we obtain

$$H^p(\underbar{E}\sldrei,\calr)= 0\text{, if } p>0 \text{ , and}$$
$$H^0(\underbar{E}\sldrei,\calr)\cong\IZ^{\oplus8}.$$

As the Bredon cohomology concentrates at low degree, using the same argument that in \cite{Sanchez-Garcia(2006SL)} we conclude that

$$K^0_{\sldrei}(\underbar{E}\sldrei)\cong\IZ^{\oplus8},$$

$$K^1_{\sldrei}(\underbar{E}\sldrei)= 0.$$
This  agrees  with  the   result predicted  by  Theorem \ref{theoremUCT}

\section{Twisted Equivariant  $K$-Theory.}

In this section we use the spectral described in Theorem \ref{spectral} to calculate the equivariant twisted K-theory groups ${}^{u_1}K_{\sldrei}(\underbar{E}\sldrei)$.

The cochain complex associated to the $\sldrei$-CW-complex structure of $\underbar{E}\sldrei$ described in section \ref{sectiontwists} is:
\begin{multline*}
0\rightarrow\bigoplus_{k=1}^5R_{u_1}(stab(v_k))\\ \xrightarrow{\Phi_1}\bigoplus_{j=1}^8R_{u_1}(stab(e_j))\xrightarrow{\Phi_2}
\bigoplus_{i=1}^5R_{u_1}(stab(t_i)) \\ \xrightarrow{\Phi_3} R_{u_1}(stab(T))\rightarrow 0
\end{multline*}

As the class $u_1$ restricts to $0$ at the level of $2$-cells and $3$-cells,  we have the following result
\begin{proposition}For $*=3$, we have a natural isomorphism
 $$H^*(\underbar{E}\sldrei ;\calr_{u_1})\cong H^*(\underbar{E}\sldrei ;\calr).$$
\end{proposition}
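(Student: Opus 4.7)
The plan is to compare the twisted and untwisted Bredon cochain complexes in top degree, exploiting the fact that $\underbar{E}\sldrei$ has dimension three so that the cochain complex terminates at $C^3$. In either coefficient system one then has $H^3 = \coker(\Phi_3 : C^2 \to C^3)$, so it suffices to produce a canonical identification of $C^2$, $C^3$ and the differential $\Phi_3$ in the twisted and untwisted settings.

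First I would read off the stabilizers appearing in $C^2$ and $C^3$ from Section \ref{sectiontwists}: the stabilizers of $t_1,\ldots,t_5$ are $C_2$, $\{1\}$, $C_2\times C_2$, $C_2$, $C_2$ respectively, and the stabilizer of $T_1$ is trivial. For each such stabilizer $H$ the task is to verify that the restricted class $i_H^*(u_1) \in H^2(H,S^1) \cong H^3(H,\IZ)$ vanishes, for then a normalization argument yields a canonical isomorphism ${}^{i_H^*(u_1)}\calr(H) \cong \calr(H)$. For the trivial group and for each $C_2$ the Schur multiplier $H^2(H,S^1)$ vanishes, so there is nothing to check. The only substantive case is the $C_2\times C_2$ stabilizer of $t_3$, whose Schur multiplier is $\IZ/2$; here I would trace $u_1$ through the restriction diagram displayed in Section \ref{sectiontwists} along the chain of subgroup inclusions $C_2\times C_2 \subseteq D_4 \subseteq S_4 \subseteq \sldrei$ to confirm that the image vanishes.

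Given these local trivializations, I would assemble them into a chain isomorphism on the truncated two-step complex $C^2 \to C^3$. Commutativity with $\Phi_3$ is automatic because $\Phi_3$ is given by restriction of (twisted) representations, and restriction between subgroups on which both twists are simultaneously trivialized coincides with ordinary restriction under the canonical identifications above. Passing to cokernels then yields the claimed isomorphism on $H^3$, with naturality inherited from the canonicity of each step of the construction.

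The principal obstacle is the final verification: confirming that the specific conjugacy class of $C_2\times C_2$ realized as the stabilizer of $t_3$ (generated by $g_{12}$ and $g_{14}$) is indeed one on which $u_1$ restricts trivially, as opposed to the nontrivial $D_2 \subseteq S_4$ appearing in the restriction diagram along which $u_1$ maps to $y_2 \in H^3(D_2)$. All other stabilizers in $C^2$ and $C^3$ have vanishing Schur multiplier, so the corresponding triviality statements are automatic, and the proposition reduces entirely to this subgroup-lattice computation.
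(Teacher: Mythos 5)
Your proposal follows essentially the same route as the paper, whose entire justification is the one-line observation that $u_1$ restricts to zero on the stabilizers of all $2$- and $3$-cells, so the tail $C^2\to C^3$ of the Bredon cochain complex --- and hence $H^3=\coker(\Phi_3)$ --- is unchanged. You are in fact more explicit than the paper: you correctly isolate the only non-automatic point, namely that $u_1$ dies on the Klein four-group $\langle g_{12},g_{14}\rangle = stab(t_3)$ (in contrast to $stab(e_1)\cong C_2\times C_2$, where the restriction is nontrivial), a verification the paper likewise asserts without carrying out.
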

We only have to determine $\Phi_1$ and $\Phi_2$, because $\Phi_3$ was determined in section \ref{sectionuntwisted}.

\subsection{Determination of $\Phi_1$}
We know from section \ref{sectiontwists} that the class $u_1$ comes from the the subgroup $stab(v_1)=\langle g_2,g_3\rangle\cong S_4$.  We have an inclusion 
$$stab(e_1)\xrightarrow{i} stab(v_1)$$$$\langle g_2,g_5\rangle\rightarrow\langle g_2,g_3\rangle$$$$g_2\mapsto g_2$$$$g_5\mapsto g_3g_2g_3^{-1}g_2g_3,$$ 
consider the central extension associated to a representing cocycle the unique non-trivial 3-cohomology class in $S_4$ (say $u_{1\mid}$) and its restriction to 
$C_2\times C_2$

$$\begin{diagram}
\node{0}\arrow{e}\node{\IZ_2}\arrow{e}\node{S_4^*}\arrow{e}\node{S_4}\arrow{e}\node{0}\\
\node{0}\arrow{e}\node{\IZ_2}\arrow{e}\arrow{n}\node{G^*}\arrow{n,r}{j}\arrow{e}\node{C_2\times C_2}\arrow{n,r}{i}\arrow{e}\node{0}
\end{diagram}$$

To evaluate the morphism $i^*:R_{u_{1\mid}}(S_4)\rightarrow R_{u_{1\mid}}(C_2\times C_2)$ is equivalent to determine the morphism
$j^*:R(S_4^*)\rightarrow R(G^*)$. 

We have a description of $S_4^*$ in terms of generators and relations given in section \ref{sectionrepresentation}.  In terms of these generators the group $G^*$ can be described
as follows
$$D_2^*=\langle h_1, h_3, z\rangle$$
and the homomorphism $j:D_2^*\rightarrow S_4^*$ is the inclusion. 

 Using the character tables  from section \ref{sectionrepresentation},  the following fact is straightforward 
 $$j^*(\epsilon_1)=\eta_1,$$
 $$j^*(\epsilon_2)=\eta_4,$$
 $$j^*(\epsilon_3)=\eta_1+\eta_4,$$
 $$j^*(\epsilon_4)=\eta_1+\eta_2+\eta_3,$$
 $$j^*(\epsilon_5)=\eta_2+\eta_3+\eta_4,$$
 $$j^*(\epsilon_6)=\eta_5,$$
 $$j^*(\epsilon_7)=\eta_5,$$
 $$j^*(\epsilon_8)=2\eta_5.$$
 
 The matrix corresponding to the morphism $i^*:R_{u_{1\mid}}(stab(v_1))\rightarrow R_{u_{1\mid}}(stab(e_1))$ is given by
 $$\begin{pmatrix}
 1&0&0&0&0\\
 0&0&0&1&0\\
 1&0&0&1&0\\
 1&1&1&0&0\\
 0&1&1&1&0\\
 0&0&0&0&1\\
 0&0&0&0&1\\
 0&0&0&0&2\\
 \end{pmatrix}$$
 
As the class $u_1$ restricts non-trivially to $stab(e_1)$, we have to determine the morphism 
 $i^*:R_{u_{1\mid}}(stab(v_2))\rightarrow R_{u_{1\mid}}(stab(e_1))$.
  
Using the above character table  one  has: 

$$i^*(\gamma_1)=\eta_1,$$
 $$i^*(\gamma_2)=\eta_4,$$
 $$i^*(\gamma_3)=\eta_3,$$
 $$i^*(\gamma_4)=\eta_2,$$
 $$i^*(\gamma_5)=\eta_2+\eta_3,$$
 $$i^*(\gamma_6)=\eta_1+\eta_4,$$
 $$i^*(\gamma_7)=\eta_5,$$
 $$i^*(\gamma_8)=\eta_5$$
 $$i^*(\gamma_9)=\eta_5.$$

 The  matrix corresponding to the morphism $i^*:R_{u_{1\mid}}(stab(v_2))\rightarrow R_{u_{1\mid}}(stab(e_1))$ is given by
 $$\begin{pmatrix}
 1&0&0&0&0\\
 0&0&0&1&0\\
 0&0&1&0&0\\
 0&1&0&0&0\\
 0&1&1&0&0\\
 1&0&0&1&0\\
 0&0&0&0&1\\
 0&0&0&0&1\\
 0&0&0&0&1\\
 \end{pmatrix}$$
 
 Now, we have to determine the morphisms $$R_{u_1\mid}(stab(v_1))\rightarrow R_{u_1\mid}(stab(e_i))$$
 with $i=2,3$, note that $R_{u_1\mid}(stab(e_i))\cong R(stab(e_i))$ because $H^3(D_3;\IZ)$ is trivial.
 
 The inclusion $stab(e_2)\rightarrow stab(v_1)$ is given by
 
 $$\langle g_6,g_{10}\rangle\rightarrow\langle g_2,g_3\rangle$$
 $$g_6\mapsto g_3 g_2g_3^{-1}$$
 $$g_{10}\mapsto g_2g_3g_2g_3^{-1}.$$
This map induces a map
$i:stab(e_2)^*\rightarrow stab(v_1)^*$, where $G^*$ denotes the inverse image of $G\subseteq S_4$ by the covering map 
$S_4^*\rightarrow S_4$. Is easy to see that  $i(g_6)\sim g_2$, $i(g_{10})\sim zg_2$ and $i(g_6g_{10})\sim zg_2g_3$. Note that $S_4$ 
has three non isomorphic irreducible projective characters,  namely $\epsilon_6$, $\epsilon_7$ and $\epsilon_8$. We have that
$$i^*(\epsilon_6)(1)=2, i^*(\epsilon_6)(g_6)=0, \text{ and } i^*(\epsilon_6)(g_6g_{10})=-1,$$
and then $i^*(\epsilon_6)=\lambda_3$. In the same way we obtain 
$i^*(\epsilon_7)=\lambda_3$. On the other hand 
$$i^*(\epsilon_8)(1)=4, i^*(\epsilon_8)(g_6)=0, \text{ and } i^*(\epsilon_8)(g_6g_{10})=1,$$ and then 
$i^*(\epsilon_8)=\lambda_1+\lambda_2+\lambda_3.$
Combining the above results with the obtained in \cite{Sanchez-Garcia(2006SL)} for linear characters we obtain that the matrix 
corresponding to the morphism $$R_{u_1\mid}(stab(v_1))\rightarrow R(stab(e_2))$$ is given by
$$\begin{pmatrix}
 1&0&0\\
 0&1&0\\
 0&0&1\\
 1&0&1\\
 0&1&1\\
 0&0&1\\
 0&0&1\\
 1&1&1\\
 \end{pmatrix}$$

Now we  determine the morphism $$i^*R_{u_1\mid}(stab(v_1))\rightarrow R(stab(e_3))$$

The inclusion $stab(e_3)\rightarrow stab(v_1)$ is given by
$$\langle g_6,g_5\rangle\rightarrow\langle g_2,g_3\rangle$$
 $$g_6\mapsto g_3 g_2g_3^{-1}$$
 $$g_5\mapsto g_3g_2g_3^{-1}g_2g_3.$$
this map induces a map
$i:stab(e_3)^*\rightarrow stab(v_1)^*$. Is easy to see that  $i(g_6)\sim g_2$, and $i(g_6g_5)\sim zg_2g_3$, we have that
$$i^*(\epsilon_6)(1)=2, i^*(\epsilon_6)(g_6)=0, \text{ and } i^*(\epsilon_6)(g_6g_5)=-1,$$ and then $i^*(\epsilon_6)=\lambda_3$. 
 In the same way we obtain 
$i^*(\epsilon_7)=\lambda_3$. On the other hand 
$$i^*(\epsilon_8)(1)=4, i^*(\epsilon_8)(g_6)=0, \text{ and } i^*(\epsilon_8)(g_6g_{10})=1,$$ and then 
$i^*(\epsilon_8)=\lambda_1+\lambda_2+\lambda_3.$
Combining the above results with the obtained in \cite{Sanchez-Garcia(2006SL)} for linear characters we obtain that the matrix 
corresponding to the morphism $$R_{u_1\mid}(stab(v_1))\rightarrow R(stab(e_3))$$ is given by
$$\begin{pmatrix}
1&0&0\\
 0&1&0\\
 0&0&1\\
 1&0&1\\
 0&1&1\\
 0&0&1\\
 0&0&1\\
 1&1&1\\
 \end{pmatrix}$$
We have to determine now the morphism $$R_{u_1\mid}(stab(v_2))\rightarrow R(stab(e_i)),$$
for $i=4,5$, note that $R_{u_1\mid}(stab(e_i))\cong R(stab(e_i))$ because $H^3(C_2;\IZ)$ is trivial.

 The inclusion $stab(e_4)\rightarrow stab(v_2)$ is given by
 $$\langle g_6\rangle\rightarrow\langle g_4,g_5\rangle$$
 $$g_6\mapsto g_5( g_4g_5)^3$$
 this map induces a map
$i:stab(e_4)^*\rightarrow stab(v_2)^*$. It is clear that  $i(g_6)\sim ab$ , in the notation used for character table of $D_6^*$, 

$$i^*(\gamma_7)(1)=2\text{ and }i^*(\gamma_7)(g_6)=0,$$ and then $i^*(\gamma_7)=\rho_1+\rho_2$, in the same way we obtain 
$i^*(\gamma_8)=\rho_1+\rho_2$ and $i^*(\gamma_9)=\rho_1+\rho_2$. Combining the above results with the obtained in 
\cite{Sanchez-Garcia(2006SL)} for linear characters we obtain that the matrix 
corresponding to the morphism $$R_{u_1\mid}(stab(v_2))\rightarrow R(stab(e_4))$$ is given by

$$\begin{pmatrix}
1&0\\
 0&1\\
 0&1\\
 1&0\\
 1&1\\
 1&1\\
 1&1\\
 1&1\\
 1&1\\
 \end{pmatrix}$$
 
 The inclusion $stab(e_5)\rightarrow stab(v_2)$ is given by
 $$\langle g_5\rangle\rightarrow\langle g_4,g_5\rangle$$
 $$g_5\mapsto g_5$$
 this map induces a map
$i:stab(e_5)^*\rightarrow stab(v_2)^*$. Again  in  the  notation  used  for  the  character  table  of  $D_6^*$,  $i(g_5)\sim b$.
We have that
$$i^*(\gamma_7)(1)=2\text{ and }i^*(\gamma_7)(g_5)=0,$$ and then $i^*(\gamma_7)=\rho_1+\rho_2$, in the same way we obtain 
$i^*(\gamma_8)=\rho_1+\rho_2$ and $i^*(\gamma_9)=\rho_1+\rho_2$. Combining the above results with the obtained in 
\cite{Sanchez-Garcia(2006SL)} for linear characters we obtain that the matrix 
corresponding to the morphism $$R_{u_1\mid}(stab(v_2))\rightarrow R(stab(e_4))$$ is given by

$$\begin{pmatrix}
1&0\\
 0&1\\
 1&0\\
 0&1\\
 1&1\\
 1&1\\
 1&1\\
 1&1\\
 1&1\\
 \end{pmatrix}$$
 
The elementary divisors of the matrix representing 
the morphism $\phi$ are 1, 1, 1, 1, 1, 1, 1, 1, 1, 1, 1, 1, 1, 1, 1, 1, 1, 1, 1. The rank of this matrix is 19.

\subsection{Determination of $\Phi_2$}

In order to determine $\Phi_2$ notice that the unique morphisms that differ from the untwisted case are 
$i^*:R_{u_1\mid}(stab(e_1))\rightarrow R_{u_1\mid}(stab(t_1))$ and $i^*:R_{u_1\mid}(stab(e_1))\rightarrow R_{u_1\mid}(stab(t_4))$
in both cases we have that $i^*(\eta_5)=\rho_1+\rho_2$.  We obtain that the elementary divisors of the matrix representing 
the morphism $\phi$ are 1, 1, 1, 1, 1, 1, 1, 1. The rank of this matrix is 10.

As  the  cochain  complexes  involved  are  free, the  computation  of  ranks  and  elementary  divisors  yield 
\begin{theorem}\label{equationktheory}
The  Bredon cohomology  with  coefficients in twisted  representations  satisfies
$$H^p(\underbar{E}\sldrei,\calr_{u_1})= 0\text{\, if } p>0,\quad  
H^0(\underbar{E}\sldrei,\calr_{u_1})\cong\IZ^{\oplus13}.$$
\end{theorem}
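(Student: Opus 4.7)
The plan is to compute the cohomology of the explicit cochain complex
\begin{multline*}
0\to\bigoplus_{k=1}^5 R_{u_1}(\mathrm{stab}(v_k))\xrightarrow{\Phi_1}\bigoplus_{j=1}^8 R_{u_1}(\mathrm{stab}(e_j)) \\ \xrightarrow{\Phi_2}\bigoplus_{i=1}^5 R_{u_1}(\mathrm{stab}(t_i))\xrightarrow{\Phi_3}R_{u_1}(\mathrm{stab}(T))\to 0
\end{multline*}
attached to the $\sldrei$-CW structure on $\underbar{E}\sldrei$ recalled in Section \ref{sectiontwists}. Since the twisted representation ring functor $\calr_{u_1}$ falls under the third bullet of the examples of Conditions \ref{conditionD}, the cochain groups are free abelian with a canonical basis given by irreducible characters of the relevant Schur covering groups, which reduces the whole problem to ranks and elementary divisors of integer matrices as in Remark \ref{matrix}.

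First I would invoke the Proposition at the start of this section: because $u_1$ restricts trivially to every stabilizer of a 2- or 3-cell, the differential $\Phi_3$ and every block of $\Phi_2$ not emanating from the edge $e_1$ (the unique edge on which $u_1$ restricts nontrivially) agree with the corresponding pieces of the untwisted complex already treated in Section \ref{sectionuntwisted}. This isolates precisely the blocks requiring fresh input, namely the whole of $\Phi_1$ and the two blocks of $\Phi_2$ leaving $\mathrm{stab}(e_1)\cong C_2\times C_2$.

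Second, I would compute those remaining blocks entry by entry. For each inclusion of cell stabilizers $i:\mathrm{stab}(\sigma)\hookrightarrow\mathrm{stab}(\tau)$ given by the generators listed in Section \ref{sectiontwists}, the plan is to lift $i$ to the corresponding Schur covers $S_4^\ast$, $D_6^\ast$, $D_2^\ast$ presented in Section \ref{sectionrepresentation}, identify the conjugacy class in the cover (up to the central involution $z$) into which the image of each generator falls, and then read off the value of each projective character on that class from the $\alpha$-character tables. When the natural section of a Schur cover differs from the one used to tabulate characters, Theorem \ref{theoremrectification} provides the translation. Combined with the linear-character blocks already computed in \cite{Sanchez-Garcia(2006SL)}, this produces the full integer matrices for $\Phi_1$ and $\Phi_2$.

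Finally I would compute Smith normal forms over $\IZ$. The expected outcome, consistent with the block-by-block computations of the preceding subsections, is that every elementary divisor equals $1$ and that $\mathrm{rank}(\Phi_1)=19$, $\mathrm{rank}(\Phi_2)=10$, with $\mathrm{rank}(\Phi_3)$ inherited from Section \ref{sectionuntwisted}. Remark \ref{matrix} then immediately yields vanishing cohomology in positive degrees and an $H^0$ that is torsion-free of rank $\dim\bigoplus_k R_{u_1}(\mathrm{stab}(v_k))-19=13$. The main obstacle is the second step: one must maintain a consistent choice of conjugation-preserving section for each Schur covering map and verify, for every generator listed in Section \ref{sectiontwists}, that its image in the cover lies in the correct $z$-coset, since any sign error there propagates into wrong projective-character entries and would invalidate the final rank count.
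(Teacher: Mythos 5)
Your proposal matches the paper's proof essentially step for step: the paper likewise observes that $u_1$ dies on all $2$- and $3$-cell stabilizers so only $\Phi_1$ and the two blocks of $\Phi_2$ leaving $\mathrm{stab}(e_1)$ need recomputation, carries out exactly the Schur-cover/projective-character-table calculation you describe (identifying images of generators up to the central element $z$), and concludes from the elementary divisors all being $1$ with $\mathrm{rank}(\Phi_1)=19$ and $\mathrm{rank}(\Phi_2)=10$ via Remark \ref{matrix}. The approach and the numerical outcome are the same, so there is nothing further to add.
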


Since the Bredon cohomology concentrates at low degree, the spectral sequence described in section \ref{spectral} collapses at level 2 and  we conclude
\begin{theorem}\label{theoremtwistedktheory}
$${ }^{u_1}K^0_{\sldrei}(\underbar{E}\sldrei)\cong\IZ^{\oplus13},$$

$${ }^{u_1}K^1_{\sldrei}(\underbar{E}\sldrei)= 0.$$
\end{theorem}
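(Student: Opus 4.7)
The plan is to deduce the theorem directly from Theorem \ref{equationktheory} by running the spectral sequence of Theorem \ref{spectral} with twist $u_1$ and coefficient system $\calr_{u_1}^?$. The input says
$$E_2^{p,q}=\begin{cases} H^p_{\sldrei}(\eub{\sldrei},\calr_{u_1}^?) & q \text{ even}\\ 0 & q \text{ odd}\end{cases}$$
and converges to ${}^{u_1}K^{p+q}_{\sldrei}(\eub{\sldrei})$. By Theorem \ref{equationktheory}, $H^p_{\sldrei}(\eub{\sldrei},\calr_{u_1}^?)$ vanishes for $p>0$ and equals $\IZ^{\oplus 13}$ for $p=0$. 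So the only nonzero entries of $E_2$ lie in the single column $p=0$, at every even value of $q$.

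The key observation is that the differentials $d_r\colon E_r^{p,q}\to E_r^{p+r,q-r+1}$ all have target on the column $p+r\geq r\geq 2$, hence land in a zero group once we are on the $p=0$ column. In the same vein, differentials into the $p=0$ column originate in the column $p=-r<0$, which is also zero. Therefore every $d_r$ vanishes for $r\geq 2$, and $E_\infty^{p,q}=E_2^{p,q}$. There are no extension problems either, since at each total degree only one filtration quotient is nontrivial.

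Concretely, for total degree $n=p+q$ even, the spectral sequence yields a single nonzero filtration quotient $E_\infty^{0,n}=\IZ^{\oplus 13}$, and for $n$ odd every filtration quotient vanishes. This gives ${}^{u_1}K^0_{\sldrei}(\eub{\sldrei})\cong\IZ^{\oplus 13}$ and ${}^{u_1}K^1_{\sldrei}(\eub{\sldrei})=0$, as desired.

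The proof involves no real obstacle: the hard work has already been done in the Bredon cohomology computation of Theorem \ref{equationktheory}, i.e.\ in tabulating the matrices of $\Phi_1$ and $\Phi_2$ via the character computations of Sections \ref{sectiontwists}--\ref{sectionrepresentation} and extracting their elementary divisors. All that remains at this stage is the essentially formal argument that a spectral sequence whose $E_2$ page is concentrated in one column must collapse without extension ambiguities.
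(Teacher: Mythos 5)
Your proposal is correct and follows exactly the paper's route: the paper likewise deduces the theorem from Theorem \ref{equationktheory} by noting that the Bredon cohomology is concentrated in degree $0$, so the spectral sequence of Theorem \ref{spectral} collapses at $E_2$ with no differentials or extension problems. You have merely spelled out the collapse argument in more detail than the paper does.
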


\section{Twisted  equivariant  $K$-Homology and  relation  to the  Baum-Connes  Conjecture  with  coefficients}

The  Baum-Connes  Conjecture  \cite{baumconnes}, \cite{valettemislin}  predicts  for  a   discrete   group $G$ the  existence  of  an  isomorphism 
$$\mu_{i}: K^G_i(\eub{G})\to K_i(C_{r}^*(G))$$
 given  by  the  (analytical) assembly  map,  where  $C_r^ *(G)$  is  the   reduced  $C^*$-algebra of  the  group  $G$.
 
More  generally,  given any  $G$-$C*$-Algebra,  the  Baum-Connes   conjecture  with  coefficients   predicts  that  a  map  
$$\mu_{i}: K^G_i(\eub{G}, A)\to K_i( A\rtimes G ))$$
Where $ K^G_i(\eub{G}, A)$ is  defined  in  terms  of  equivariant  and  bivariant  $KK$-groups ,

$$K_*^G(\eub{G},A)= \underset{{\rm G-compact}X\subset \eub{G}}{\colim} KK_{*}(C_{0}(X), A)$$

 and  $  A\rtimes G$ denotes  the  crossed  product $C^ *$-algebra. 
See \cite{echterhoffchabert},  \cite{echterhoff}  for  more  details. 

The  class  of  twists considered  in the  example  of  $\sldrei$  let  define  a  particular  choice  of  coefficients  for  this  assembly  map.

\begin{definition}
Let $G$  be  a  discrete  group. Given a  cocycle  $\omega \in Z^2(G,S^1)$,  an  $\omega$-representation  on a  Hilbert  space  $\HH$  is  a   map  $V: G\to  \UU(\HH)$ 
satisfying $v(s)V(t)= \omega(s,t)V(st)$.
\end{definition}
 
Composing  with the  quotient  map $\UU(\HH)\to  PU(\HH)= \UU(\HH)/S^1$,  together  with the  identification of  the group $PU(\HH)$ as  the  automorphisms  of  the  $C^*$-algebra  of  compact  operators on $\HH$, denoted  by $\KK$ gives  an  action  of  $G$ on  $\KK$.     This  algebra   is  denoted as  $\KK_\omega$. Theorem \ref{theoremUCT}  together  with the Atiyah-Hirzebruch spectral sequence  yield:

\begin{corollary}

Let  $G$  be  a  discrete  group  with a finite model  for  $\eub{G}$. Let  $\omega \in Z^2(G, S^1)$.  Assume  that the  Bredon  homology  groups  $H_{*}^G(X, R^{- \alpha})$ are  all  free  abelian and are concentrated in degree 0 and 1.  Then  there  exists  a  duality  isomorphism
$$K^{*}_G(C_0(\eub{G}), K_{-\omega}) \longrightarrow K^{*}_G( K_{\omega}, C_0(\eub{G}))$$

\end{corollary}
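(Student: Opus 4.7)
The plan is to collapse both sides of the asserted duality onto Bredon (co)homology with twisted coefficients via Atiyah--Hirzebruch spectral sequences and then to identify them using Theorem \ref{theoremUCT} together with complex conjugation on representations. By definition, $K^{*}_G(C_0(\eub{G}), K_{-\omega})$ is the $(-\omega)$-twisted equivariant $K$-\emph{homology} of $\eub G$; the homological analogue of Theorem \ref{spectral} (constructed along the Davis--L\"uck recipe) converges to it from an $E_2$-page given, in even $q$-rows, by $H_{p}^{G}(\eub G, \calr_{-\omega,?})$. Dually, $K^{*}_G(K_{\omega}, C_0(\eub{G}))$ is the $\omega$-twisted equivariant $K$-theory, and Theorem \ref{spectral} provides a spectral sequence converging to it from $E_2^{p,q} = H^{p}_G(\eub G, \calr^{?}_{\omega})$ in even $q$-rows.

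Complex conjugation on representations supplies a natural isomorphism $\calr^{?}_{\omega} \cong \calr^{?}_{-\omega}$ of coefficient systems of free abelian groups, sending an irreducible $\omega$-twisted character to its conjugate $(-\omega)$-twisted character, so I may rewrite the cohomological $E_2$-page also in terms of the $(-\omega)$-twisted representation ring. Under the hypothesis, the Bredon groups in question are free abelian and concentrated in degrees $p=0,1$; hence all differentials $d_r$ with $r\geq 2$ necessarily land in vanishing entries, and the extension problems at $E_{\infty}$ split by freeness. Both $K$-groups are therefore canonically identified with the direct sum of their degree $0$ and degree $1$ Bredon (co)homology with coefficients in the appropriate twisted representation ring functor.

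I would then invoke Theorem \ref{theoremUCT} for the pair $M^{?} = \calr^{?}_{-\omega}$ and $M_{?} = \calr_{-\omega,?}$, which satisfies Conditions \ref{conditionD} by the third bullet following that definition, with preferred bases given by the characters of irreducible twisted representations of Schur covering groups. Because $H_{n-1}^{G}(\eub G, \calr_{-\omega,?})$ is free abelian by hypothesis, the $\Ext$ term in the UCT vanishes and yields a natural isomorphism
$$H^{n}_G(\eub G, \calr^{?}_{-\omega}) \;\cong\; {Hom}_{\mathbb{Z}}\bigl(H_{n}^{G}(\eub G, \calr_{-\omega,?}), \mathbb{Z}\bigr).$$
Since the Bredon homology groups are themselves free abelian of finite rank, Hom-duality into $\mathbb{Z}$ returns them up to canonical isomorphism. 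Composing the two AHSS edge identifications with the complex-conjugation isomorphism and the UCT Hom-duality then produces the desired map $K^{*}_G(C_0(\eub{G}), K_{-\omega}) \to K^{*}_G(K_\omega, C_0(\eub{G}))$.

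The principal technical obstacle is setting up the homological Atiyah--Hirzebruch spectral sequence for the bivariant functor $KK^{G}_{*}(C_0(-), K_{-\omega})$ and verifying that its $E_2$-page is indeed Bredon homology with coefficients in $\calr_{-\omega,?}$, together with the sign convention relating $\omega$ to $-\omega$ when passing from $K$-theory coefficients to representation-ring coefficients via complex conjugation. Once these structural identifications are firmly in place, the conclusion is a formal consequence of Theorem \ref{theoremUCT} combined with the concentration and freeness hypotheses.
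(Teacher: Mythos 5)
Your proposal is correct and follows essentially the same route the paper intends (the paper only sketches it): collapse both sides onto Bredon (co)homology via the Atiyah--Hirzebruch spectral sequences, note that concentration in degrees $0,1$ forces degeneration at $E_2$, and then use Theorem \ref{theoremUCT} with the freeness hypothesis to kill the $\Ext$ term and identify $H^n_G$ with $\mathrm{Hom}_{\mathbb{Z}}(H_n^G,\mathbb{Z})\cong H_n^G$. Your extra complex-conjugation step is harmless but redundant, since the pair licensed by the third bullet of Conditions \ref{conditionD} is already the opposite-twist pair $(\calr_{\omega}^{?},\calr_{-\omega,?})$, which matches the two sides of the asserted duality directly.
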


Related  duality  isomorphisms  have  been deduced  for  almost  connected  groups  by  Echterhoff  \cite{echterhoff} using  methods  from  equivariant Kasparov  $KK$-theory (particularly the Dirac-Dual Dirac  method) and  positive  results  for  the  Baum-Connes  Conjecture  with coefficients \cite{echterhoffemersonkim}.

As  a  consequence  of  Theorem  \ref{theoremUCT}, and  Theorem \ref{equationktheory},   the  $\alpha$- twisted  Bredon homology  of $\eub{\sldrei}$ is  given by   
\begin{equation*}
H_p(\underbar{E}\sldrei,\mathcal{R}_{u_1})= 0\text{, if }\, p>0,\quad  
H_0(\underbar{E}\sldrei,\mathcal{R}_{u_1})\cong\IZ^{\oplus13}.
\end{equation*}

A  spectral  sequence  argument and  the  particular  shape  of  the  twists   let  us  conclude  that  these  groups  agree  with  the equivariant $K$-homology  groups  with  coefficients in $\KK_{u_1}$, appearing  in the left  hand  side  of  the conjecture  with  coefficients. 

\begin{corollary}\label{corollaryduality}
The  equivariant $K$-homology  groups of  $\sldrei$ with  coefficients  in the  $\sldrei $-$C^*$  algebra  $\KK_{u_{1}}$  are  given  as  follows: 
\begin{equation*}
K_p^{\sldrei}(\eub{\sldrei},\KK_{u_{1}})= 0 \, \text{ p odd}, \quad K_p^ {\sldrei}(\eub{\sldrei}, \KK_{u_{1}})\cong\IZ^{\oplus13} \text{ p even}
\end{equation*}

\end{corollary}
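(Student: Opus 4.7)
The plan is to combine three inputs: the twisted Bredon cohomology computation of Theorem \ref{equationktheory}, the Universal Coefficient Theorem \ref{theoremUCT} to pass from cohomology to homology, and a homological analogue of the Atiyah--Hirzebruch spectral sequence of Theorem \ref{spectral} converging to the analytical $K$-homology $K_*^{\sldrei}(\eub{\sldrei}, \KK_{u_1})$.

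First I would apply Theorem \ref{theoremUCT} to the pair of coefficient systems $(\calr^?_{u_1}, \calr{_{u_1}}_?)$, which satisfies Conditions \ref{conditionD} through the Schur covering groups construction and twisted Frobenius reciprocity explained in the bullet list following those conditions. Combined with Theorem \ref{equationktheory}, which yields $H^0_{\sldrei}(\eub{\sldrei}, \calr_{u_1}) \cong \IZ^{\oplus 13}$ and $H^n_{\sldrei}(\eub{\sldrei}, \calr_{u_1}) = 0$ for $n \geq 1$, the UCT short exact sequence forces the Bredon homology to satisfy $H_0^{\sldrei}(\eub{\sldrei}, \calr_{u_1}) \cong \IZ^{\oplus 13}$ and $H_n^{\sldrei}(\eub{\sldrei}, \calr_{u_1}) = 0$ for $n \geq 1$. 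Indeed, the vanishing of $H^n$ for $n \geq 1$ simultaneously forces $\Ext(H_{n-1}, \IZ) = 0$ (so each $H_{n-1}$ is torsion-free) and $Hom(H_n, \IZ) = 0$ (so each $H_n$ is torsion); comparing these constraints across consecutive degrees gives $H_n = 0$ for $n \geq 1$, and in degree zero one then reads $\IZ^{\oplus 13} \cong H^0 \cong Hom(H_0, \IZ)$, so $H_0 \cong \IZ^{\oplus 13}$.

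Second I would invoke the homological twisted Atiyah--Hirzebruch spectral sequence, the dual of Theorem \ref{spectral}, whose $E^2$ page is
$$E^2_{p,q} = \begin{cases} H_p^{\sldrei}(\eub{\sldrei}, \calr_{u_1}) & q \text{ even}, \\ 0 & q \text{ odd}, \end{cases}$$
and which converges to $K_{p+q}^{\sldrei}(\eub{\sldrei}, \KK_{u_1})$. Because the Bredon homology is concentrated in the single column $p=0$ and is free abelian there, all higher differentials vanish and no extension problems arise, so the spectral sequence collapses at $E^2$. Bott periodicity in the $q$-direction then delivers the asserted $K_p \cong K_{p+2}$ and the stated values follow.

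The main obstacle is justifying the identification of the analytical $K$-homology $KK^{\sldrei}_*(C_0(\eub{\sldrei}), \KK_{u_1})$ with this Bredon-type invariant via a Davis--L\"uck Atiyah--Hirzebruch spectral sequence. The natural route is to regard $X \mapsto KK^{\sldrei}_*(C_0(X), \KK_{u_1})$ as an equivariant homology theory in the Davis--L\"uck sense, whose value on an orbit $\sldrei/H$ (with $H$ finite) equals $KK^H_*(\mathbb{C}, \KK_{u_1|_H})$, and then to match this orbit-category coefficient system with $\calr{_{u_1}}_?$ by identifying $K_0^H(\mathrm{pt}, \KK_{u_1|_H})$ with the $u_1|_H$-twisted representation ring through twisted characters of a Schur covering group. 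Once the coefficient system is recognized, the general Davis--L\"uck Atiyah--Hirzebruch spectral sequence supplies the required convergence statement and the remainder of the argument is formal.
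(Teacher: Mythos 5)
Your proposal is correct and follows essentially the same route as the paper: the paper likewise deduces the twisted Bredon homology from Theorem \ref{equationktheory} via the Universal Coefficient Theorem \ref{theoremUCT}, and then appeals to a (homological) Atiyah--Hirzebruch/Davis--L\"uck spectral sequence argument, collapsing because the coefficients are free and concentrated in degree $0$, to identify these groups with $K_*^{\sldrei}(\eub{\sldrei},\KK_{u_1})$. Your write-up is in fact more explicit than the paper's own one-paragraph justification, in particular in spelling out the $\Ext$/$\Hom$ bookkeeping in the UCT step and the identification of the orbit-category coefficients with the twisted representation rings.
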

\newpage

\bibliographystyle{alpha}
\bibliography{twisted}

\end{document}